\newdimen\bibspace
\renewenvironment{thebibliography}[1]{%
 \section*{\refname 
       \@mkboth{\MakeUppercase\refname}{\MakeUppercase\refname}}%
     \list{\@biblabel{\@arabic\c@enumiv}}%
          {\settowidth\labelwidth{\@biblabel{#1}}%
           \leftmargin\labelwidth
           \advance\leftmargin\labelsep
           \itemsep\bibspace
           \parsep\z@skip     %
           \@openbib@code
           \usecounter{enumiv}%
           \let\p@enumiv\@empty
           \renewcommand\theenumiv{\@arabic\c@enumiv}}%
     \sloppy\clubpenalty4000\widowpenalty4000%
     \sfcode`\.\@m}
    {\def\@noitemerr
      {\@latex@warning{Empty `thebibliography' environment}}%
     \endlist}
\newtheorem{thm}{Theorem}[section]
\newtheorem{lem}[thm]{Lemma}
\newtheorem{prop}[thm]{Proposition}
\newtheorem{cor}[thm]{Corollary}
\def\Xint#1{\mathchoice
  {\XXint\displaystyle\textstyle{#1}}%
  {\XXint\textstyle\scriptstyle{#1}}%
  {\XXint\scriptstyle\scriptscriptstyle{#1}}%
  {\XXint\scriptscriptstyle\scriptscriptstyle{#1}}%
  \!\int}
\def\XXint#1#2#3{{\setbox0=\hbox{$#1{#2#3}{\int}$}
  \vcenter{\hbox{$#2#3$}}\kern-.5\wd0}}
\def\dashint{\Xint-}
\newcommand{\al}{\alpha}                \newcommand{\lda}{\lambda}
\newcommand{\om}{\Omega}                \newcommand{\pa}{\partial}
\newcommand{\va}{\varepsilon}           \newcommand{\ud}{\mathrm{d}}
\newcommand{\be}{\begin{equation}}      \newcommand{\ee}{\end{equation}}
\newcommand{\Lda}{\Lambda}              \newcommand{\B}{\mathcal{B}}
\newcommand{\R}{\mathbb{R}}
\begin{document}

\title{\textbf{Isolated singularities of solutions to the  Yamabe equation in dimension $6$}
\bigskip}

\author{\medskip
Jingang Xiong\footnote{J. Xiong is partially supported by NSFC 11922104 and 11631002.},\  \ \
Lei Zhang \footnote{ L. Zhang is partially supported by a collaboration grant of Simons Foundation.}}

\date{\today}

\maketitle

\begin{abstract} We study the asymptotic behavior of local solutions to the Yamabe equation near an isolated singularity, when the metric is not conformally flat. We prove that, in dimension $6$, any solution is asymptotically close to a Fowler solution, which is an extension of the same result for lower dimensions by F.C. Marques in 2008.

\end{abstract}

\section{Introduction}

Let $g$ be a smooth Riemannian metric on the  unit ball $B_1\subset \R^n$, $n\ge 3$. We study positive solutions of the Yamabe equation in the punctured ball
\begin{equation}\label{Y0}
-L_g u=n(n-2)u^{\frac{n+2}{n-2}}\quad \mbox{in }B_1\setminus \{0\},
\end{equation}
where
$L_g = \Delta_g- \frac { (n-2) }{ 4(n-1) }R_g $ is the conformal Laplacian,
 $\Delta_g$ is the Laplace-Beltrami operator and $R_g$ is the
scalar curvature of $g$. We will always assume that solutions are smooth away from the singular point.

 When $g$ is conformally flat  and $0$ is a non-removable singularity, Caffarelli-Gidas-Spruck \cite{CGS} proved that
\be \label{eq:CGS}
u(x)=u_0(x)(1+o(1))\quad \mbox{as }x \to 0,
\ee
where  $u_0$ is a Fowler solution. Here Fowler solutions are referred to the singular positive solutions of
\[
-\Delta u_0= n(n-2)u_0^{\frac{n+2}{n-2}} \quad \mbox{in }\R^n\setminus \{0\},
\]
which were proved to be radially symmetric and classified in the same paper \cite{CGS}. A different proof and refinement of this result were given by  Korevaar-Mazzeo-Pacard-Schoen \cite{KMPS}, in particular, they improved the $o(1)$ remainder term to a $O(|x|^{\al})$ for some $\al>0$. Namely,
\[
u(x)=u_0(x)(1+O(|x|^{\alpha})).
\]

When $g$ is not conformally flat and $3\le n\le 5$, Marques \cite{marques-1} established the same asymptotic behavior. In this paper, we show that this still holds in  dimension $6$, which appears to be the borderline of the current method.

\begin{thm}\label{asy-s-1} Suppose that $n=6$ and $u\in C^2(B_1\setminus \{0\})$ is a positive solution of \eqref{Y0}.  If  $0$ is not a removable singularity of $u$,  then
\be  \label{eq:asy-2}
u(x)=u_{0}(1+O(|x|^{\al}))\quad \mbox{as }x\to 0,
\ee
where $u_0$ is a Fowler solution and $\al>0$.
\end{thm}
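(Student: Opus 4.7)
The plan is to follow Marques' strategy for $n\in\{3,4,5\}$ in \cite{marques-1}, adapted to handle the borderline case $n=6$. First, I would pass to conformal normal coordinates of sufficiently high order centered at the singular point, so that $g_{ij}(0)=\delta_{ij}$, $\det g = 1+O(|x|^N)$, and the scalar curvature has the expansion $R_g(x) = c_n\,|W_g(0)|^2\,|x|^2 + O(|x|^3)$. After a conformal change of the unknown, \eqref{Y0} becomes a controlled perturbation of the flat Yamabe equation $-\Delta u = n(n-2)u^{(n+2)/(n-2)}$, in which the nontrivial error is governed by the Weyl tensor at the origin.

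Next I would carry out the standard blow-up analysis: moving-plane/moving-sphere arguments combined with Harnack inequalities and the Caffarelli-Gidas-Spruck classification give the two-sided bound $c|x|^{-(n-2)/2}\le u(x)\le C|x|^{-(n-2)/2}$ near $0$, asymptotic radial symmetry of $u$ modulo $o(1)$, and the fact that every subsequential limit of the rescalings $v_k(y)=r_k^{(n-2)/2}u(r_ky)$ is a Fowler solution. To upgrade $o(1)$ to $O(|x|^\al)$ and to identify a unique limiting profile, I would combine two tools. (a) The Pohozaev quantity
\[
\mathcal{P}(r) = \int_{\pa B_r} \bigl( \tfrac{n-2}{2}\,u\,\pa_\nu u - \tfrac{r}{2}|\nabla u|^2 + r(\pa_\nu u)^2 + \tfrac{(n-2)^2}{2n}\,r\,u^{\frac{2n}{n-2}} \bigr)\,d\sigma
\]
is almost-monotone, and its limit as $r\to 0$ pins down the Delaunay neck-size of the approximating Fowler profile $u_0$. (b) In the Emden-Fowler coordinates $(t,\theta)=(-\log|x|,\,x/|x|)$, writing $w = v - v_0$ for the difference between the cylindrical transforms of $u$ and $u_0$, the linearized operator at $v_0$ decouples along spherical harmonics on $S^{n-1}$ and its indicial roots dictate exponential decay $|w(t,\cdot)|\lesssim e^{-\al t}$, which translates to the desired $O(|x|^\al)$ bound.

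The main obstacle, and the reason the argument of \cite{marques-1} does not immediately cover $n\ge 6$, is the Weyl contribution to the Pohozaev balance at the borderline dimension: the bulk error in $\mathcal{P}(r_2)-\mathcal{P}(r_1)$ contains a term proportional to $|W_g(0)|^2 \int |x|^2\,u^{\frac{2n}{n-2}}\,dx$, whose integrand is $\sim|x|^{2-n}$ after substituting $u\sim|x|^{-(n-2)/2}$ and becomes exactly borderline against the volume factor when $n=6$, producing a logarithmic loss once one tries to convert the integral estimate into a pointwise decay of $w$. I would handle this by (i) working in conformal normal coordinates of order $N$ large enough that all subleading metric errors decay faster than the Weyl-quadratic leading term, (ii) exploiting the near-radiality of $u$ to extract cancellations against the trace-free structure of the Weyl-quadratic integrand before invoking Cauchy-Schwarz, and (iii) running an iterative Pohozaev/spectral bootstrap in which an initial weak decay rate for $w$ feeds back to improve the bulk integral estimates, eventually reaching the target rate. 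Once the borderline integral is tamed, the spectral framework on $\R\times S^{5}$ produces \eqref{eq:asy-2} with some explicit $\al>0$ depending on the indicial roots of the linearization at the Fowler family.
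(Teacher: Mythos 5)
Your top-level architecture (two-sided bound, Pohozaev integral, cylindrical/indicial-root analysis) matches the paper's, but the proposal locates the difficulty in the wrong place and leaves the genuinely hard steps unargued. You assert that ``standard'' moving-sphere and Harnack arguments give the two-sided bound $c|x|^{-2}\le u\le C|x|^{-2}$; for $n=6$ this is exactly what is \emph{not} standard and is the entire content of the paper. In the moving-spheres upper bound the error produced by the metric perturbation on the rescaled annulus is of size $C_0\sigma_k M_k^{-1}|y|^{-6}+C_0M_k^{-2}|y|^{-2}$, which is borderline against the gap $v_k-v_k^{\lambda}\gtrsim \epsilon_0(|y|-\lambda)|y|^{-5}$; Marques' scheme needs $g_{ij}=\delta_{ij}+O(|x|^{\tau})$ with $\tau>\frac{n-2}{2}$, automatic only for $n\le 5$. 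The paper closes this by solving an explicit radial ODE for an auxiliary function $h_\lambda$ calibrated to that error and by applying the maximum principle only on the selected set $\omega_\lambda=\{v_k<2v_k^{\lambda}+2|h_\lambda|\}$, which is confined to $B_{\delta_1 M_k^{1/3}}$ via the crude bound $v_k\ge \Lambda M_k^{-1}$. Likewise the lower bound does not come from moving spheres or blow-up at all: it comes from the removability criterion $P(u)\le 0$ with equality iff $0$ is removable, and its proof at $n=6$ needs a new ingredient --- a conformal change by $f=(1-r^2)^{-\frac{n-2}{2}}$ making the scalar curvature negative, which gives the first-order term in the ODE for the cylindrical average $w(t)$ a favorable sign and yields $w_{tt}-4w\le -c_2w^{2}$ on $[\bar t_i,t_i-1]$. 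That improves the constant in the pointwise estimate from $e^{-2\bar t_i}$ to $e^{-4t_i}$, which is precisely what beats the borderline bulk integral $\int|x|^{4-2n}\,\ud x=\int|x|^{-8}\,\ud x$ in the Pohozaev balance when $n=6$. None of this is supplied by your outline.

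By contrast, the step you devote most attention to --- upgrading $o(1)$ to $O(|x|^{\alpha})$ by indicial roots on $\R\times\mathbb{S}^5$ --- is, once the two-sided bound is known, a direct citation of Theorem 8 of \cite{marques-1} and requires nothing new at $n=6$. Your proposed remedies for the borderline term also do not hold up: conformal normal coordinates of arbitrarily high order cannot remove the $O(|x|^2)$ deviation of the metric, since that term is the curvature itself and is an obstruction independent of the coordinate choice, so remedy (i) buys nothing; and (ii)--(iii) are too vague to see how the logarithmic loss is actually avoided. To repair the proposal you would need to supply the auxiliary-function moving-spheres argument for the upper bound and the negative-scalar-curvature conformal deformation (or an equivalent improvement of the differential inequality for $w$) for the removability dichotomy.
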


Once the convergence to a Fowler solution is established,  the arguments of \cite{KMPS} and \cite{marques-1} can be used to improve the approximation by deformed Flower solutions. See Han-Li-Li \cite{HLL} for expansions up to arbitrary orders when the metric is conformally flat. Existence of solutions of \eqref{Y0} is related to the study of local solutions of the singular Yamabe problem, which has been studied by Schoen \cite{S1}, Mazzeo-Smale \cite{MS}, Mazzeo-Pollack-Uhlenbeck \cite{MPU}, Mazzeo-Pacard \cite{MP} and etc, after the resolution of the Yamabe problem by Yamabe \cite{Y}, Trudinger \cite{T}, Aubin \cite{A} and Schoen \cite{S}.

The difficulty to establish asymptotical symmetry of solutions near isolated singularities is that \eqref{Y0} has no symmetry when  $g$ is not conformally flat.  Similar difficulty also happens to the prescribing scalar curvature equation
\be \label{eq:chen-lin}
-\Delta u=n(n-2) K u^{\frac{n+2}{n-2}} \quad \mbox{in }B_1\setminus \{0\}, \quad u>0,
\ee
where $\Delta $ is the Laplace operator and $K>0$ is a $C^1$ function in $B_1$. Under the flatness condition
\be \label{eq:chen-lin-1}
c_1 |x|^{l-1}\le |\nabla K(x)| \le c_1|x|^{l-1},
\ee
where $c_1, c_2>0$ and $l\ge \frac{n-2}{2}$ are constants,  Chen-Lin \cite{ChenLin2, ChenLin3}  established \eqref{eq:CGS} for non-removable singularities.
On the other hand, they constructed a singular solution which does not satisfy \eqref{eq:CGS}  when $l<\frac{n-2}{2}$. As for \eqref{Y0}, the metric in normal coordinates centered at any point $\bar x\in B_1$ has the flatness
\[
g_{ij}=\delta_{ij}+O(|x|^\tau)
\]
with $\tau=2$. Marques' proof would be possible to give \eqref{eq:asy-2} in all dimensions, if  $\tau>\frac{n-2}{2}$ in a neighborhood of $0$. Obviously,  this condition holds automatically in dimensions in $3,4,5$ but does not in dimension $6$.

By Theorem 8 of \cite{marques-1}, \eqref{eq:asy-2} follows from 
\begin{equation}\label{b-6}
\frac{1}{C}d_g(x,0)^{-\frac{n-2}{2}}\le u(x)\le C d_g(x,0)^{-\frac{n-2}{2}}
\end{equation}
for some $C\ge 1$ independent of $x$, where $d_g$ is the distance function with respect to $g$. The proof of both the upper bound and lower bound in (\ref{b-6}) for dimension $6$ requires delicate analysis to handle difficulties related to borderline cases. To establish the upper bound we use the moving spheres method which requires a careful construction of a test function. In order to overcome certain difficulties we build our argument on properties of the conformal normal coordinates and apply the maximum principle only on selected domains.  See  \cite{LZhu, LZhang, zhang-li-low-dim, LZ2, Z} about the moving spheres method. To prove the lower bound in dimension $6$, we deform the metric conformally to one with negative scalar curvature and take advantage of certain monotonicity properties of the solutions. As a result we can  improve a differential inequality of \cite{marques-1} that plays a crucial role in the proof of the lower bound. The dimension $6$ case shows some similarity to the borderline $l=\frac{n-2}{2}$ of  \cite{ChenLin2, ChenLin3}, but the proof in this article is more involved.

To end this section, we would like to mention some related papers about the isolated singularities problem for the Yamabe equation; see \cite{caju, CP-1, CP-2, HLT, JX, Lc, Li,  SS, Ta-Zhang, Z} and references therein.

This paper is organized as follows. In section \ref{s:2},  we establish the upper bound.
In section \ref{s:3}, we establish the criteria of singularity removability in terms of Pohozaev integral and prove the main theorem.

\bigskip

\noindent \textbf{Acknowledgement:} This work was completed while J. Xiong was visiting Rutgers University, to which he is grateful for providing  the very stimulating research environments and supports. Both authors would like to thank Professor YanYan Li for his insightful guidance and constant encouragement.

\section{The upper bound}
\label{s:2}

We will use $\B_\rho^g(x)$ to denote the geodesic ball with respect to $g$ centered at $x$ with radius $\rho>0$, the superscript $g$ in $\B_\rho^g(x)$ will be dropped when there is no ambiguity.

\begin{thm}\label{sphe-har} Suppose $n=6$ and $u\in C^2(B_1 \setminus \{0\})$ is a solution of \eqref{Y0}.  Then
\begin{equation}\label{sphe-har-e}
\limsup_{x\to 0} d_g(x,0)^{\frac{n-2}2} u(x)<\infty.
\end{equation}
\end{thm}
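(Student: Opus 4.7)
The plan is to argue by contradiction using the moving-sphere method in conformal normal coordinates centered at the origin. As a preliminary, replace $g$ by a pointwise conformal metric so that in the resulting normal coordinates at $0$ one has $\det g\equiv 1$ near $0$, the scalar curvature $R_g$ vanishes to arbitrary order at $0$, and the expansion $g_{ij}=\delta_{ij}+h_{ij}$ satisfies $h_{ij}=O(|x|^{2})$, $\sum_{j}h_{ij}(x)x^{j}\equiv 0$ (Gauss lemma), and $\operatorname{tr}h(x)=O(|x|^{N})$ for every $N$. The solution $u$ transforms accordingly and still satisfies a Yamabe equation, so I may assume these properties directly. If \eqref{sphe-har-e} fails, there exist $x_k\to 0$ with $M_k:=u(x_k)\to\infty$ and $d_g(x_k,0)^{(n-2)/2}M_k\to\infty$, so $\mu_k:=M_k^{-2/(n-2)}$ satisfies $\mu_k/d_g(x_k,0)\to 0$. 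A standard Schoen-type refinement gives, after passing to a subsequence, $C^{2}_{\mathrm{loc}}$-convergence $v_k(y):=M_k^{-1}u(x_k+\mu_k y)\to U(y)=(1+|y|^{2})^{-(n-2)/2}$ on $\R^{n}$.

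For each $k$, introduce the Euclidean Kelvin transform $u_{x_k,\lambda}(x)=\bigl(\lambda/|x-x_k|\bigr)^{n-2}\,u\bigl(x_k+\lambda^{2}(x-x_k)/|x-x_k|^{2}\bigr)$ and set $w_\lambda:=u-u_{x_k,\lambda}$. The bubble profile forces $w_\lambda\ge 0$ on $\overline{\B}_{r_0}(x_k)\setminus \B_\lambda(x_k)$ for $\lambda$ of order $\mu_k$; let $\lambda_k^{*}$ be the supremum of such $\lambda$. A uniform lower bound $\lambda_k^{*}\ge\lambda_0>0$ independent of $k$ would yield $u\ge u_{x_k,\lambda_0}$ on $\B_{r_0}(x_k)$, hence a uniform pointwise bound on $u$ near $x_k$, contradicting $M_k\to\infty$. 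The task is therefore to prevent $\lambda_k^{*}$ from collapsing as $\lambda$ grows past $\mu_k$.

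The main obstacle is the borderline nature of dimension $6$. The function $w_\lambda$ satisfies an inequality of the form $L_{g}w_\lambda+\xi_\lambda w_\lambda \ge -|E_\lambda|$, where $\xi_\lambda$ is bounded and $E_\lambda$ is an error produced by the non-flatness, schematically of size $|h_{ij}\partial_{ij}u_{x_k,\lambda}|+|R_g|\,u_{x_k,\lambda}$. In dimensions $3,4,5$ one has $|h|\lesssim|x|^{\tau}$ with $\tau>(n-2)/2$, so $E_\lambda$ is strictly subcritical and is absorbed by a standard barrier $C\bigl(\lambda^{-\alpha}-|x-x_k|^{-\alpha}\bigr)$ with $\alpha\in(0,\tau)$; in $n=6$ exactly $\tau=(n-2)/2=2$ and this direct barrier fails. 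The plan to recover room is to exploit the two identities enforced by the conformal normal coordinates, $\sum_{j}h_{ij}(x)x^{j}\equiv 0$ and $\operatorname{tr}h(x)=O(|x|^{N})$: these cause the radial and trace components of $h_{ij}\partial_{ij}u_{x_k,\lambda}$ to cancel at the critical order, so the effective $E_\lambda$ decays faster than $|x|^{2}$ along the modes seen by the moving sphere, and combined with the higher-order vanishing of $R_g$ this restores an effective flatness strictly greater than $2$. I would then build a test function $\Psi$ of mixed type $A|x-x_k|^{-\alpha}+B|x|^{\beta}$ satisfying $L_{g}\Psi+\xi_\lambda\Psi\le -|E_\lambda|$ on a carefully selected subdomain $D_k\subset \B_{r_0}(x_k)\setminus \B_\lambda(x_k)$ where these cancellations are effective, and treat the excised thin region (where the cancellations degenerate) by direct pointwise estimates from the bubble $v_k$. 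The maximum principle applied to $w_\lambda+\varepsilon\Psi$ on $D_k$ then pushes $\lambda_k^{*}$ up to a uniform $\lambda_0>0$, closing the contradiction. The delicate part, and the reason for the ``selected domains'' language in the introduction, is the simultaneous choice of $D_k$ and $\Psi$ so that every borderline term is either beaten by $L_{g}\Psi$ on $D_k$ or controlled by the bubble approximation on $D_k^{c}$.
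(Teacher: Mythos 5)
Your overall architecture (blow-up at a sequence of local maxima, conformal normal coordinates, moving spheres with a correction term on selected domains) matches the paper's strategy in outline, but three of your load-bearing steps do not hold up. First, the conformal normal coordinates must be centered at the blow-up points $x_k$, not at the origin: the cancellation that rescues the borderline case is that, when $\det\bar g\equiv 1$ in normal coordinates about $x_k$, the operator $\Delta_{\bar g}-\Delta=\bar b_j\partial_j+\bar d_{ij}\partial_{ij}$ annihilates functions that are radial about $x_k$, so in the error $E_\lambda$ the contribution of the (radial) Kelvin-transformed bubble $U^\lambda$ drops out and only the difference $(v_k-U)^\lambda$ survives, carrying the small factor $\sigma_k=\|v_k-U\|_{C^2(B_2)}\to 0$. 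Your Gauss-lemma and trace identities at $0$ say nothing about functions radial about $x_k$, and the dangerous part of the error lives at scales $|x-x_k|\sim M_k^{-2/(n-2)}\ll |x_k|$, where the two radial structures are unrelated. Second, your preliminary claim that $R_g$ vanishes to arbitrary order at the center of conformal normal coordinates is false for $n\ge 6$ unless the Weyl tensor vanishes (one only has $R_{\bar g}=O(|x|^2)$, with $\Delta R_{\bar g}(0)$ proportional to $|W(0)|^2$); this term produces the second borderline error, of size $O(M_k^{-2})|y|^{-2}$ after rescaling, which requires the logarithmic part of the auxiliary function $h_\lambda$ in \eqref{eq:h-lda} --- precisely the kind of correction your proposed power barrier $A|x-x_k|^{-\alpha}+B|x|^{\beta}$ cannot supply, as you yourself observe that pure power barriers fail at the borderline. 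Assuming $R_g=O(|x|^N)$ assumes away half of the dimension-$6$ difficulty.

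Third, your route to the contradiction is not viable. Moving the sphere about $x_k$ out to a fixed radius $\lambda_0$ yields $u\ge u_{x_k,\lambda_0}$ on the annulus, which upon inversion only gives $u(z)\le (\lambda_0/|z-x_k|)^{n-2}\sup u$ for $\lambda_0^2/r_0<|z-x_k|<\lambda_0$; the comparison never reaches $x_k$ and is perfectly consistent with $u(x_k)=M_k\to\infty$, so no uniform pointwise bound near $x_k$ follows. The workable contradiction is obtained at the bubble scale: one works in the rescaled variable $y$, moves the sphere only over $\lambda\in[1/2,2]$, shows that nothing stops it before $\bar\lambda=2$ (the outer boundary is controlled by the a priori lower bound $v_k\ge \Lambda M_k^{-1}$, the interior by the maximum principle with $h_\lambda$ applied only on the set where $v_k<2v_k^{\lambda}+2|h_\lambda|$, which is contained in $B_{\delta_1 M_k^{1/3}}$), and then passes to the limit to obtain $U\ge U^{2}$ for $|y|>2$, contradicting the strict inequality $U<U^{\lambda}$ for $\lambda>1$. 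Without this mechanism (or a genuine substitute), and without an explicit barrier and domain decomposition, the proposal does not close.
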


\begin{proof}

Without loss of generality, we assume $B_1$ is the normal coordinates chart of $g$ centered at $0$.  If \eqref{sphe-har-e} were invalid,  there exists a sequence $x_k\to 0$ such that
\be\label{eq:contra-hy-1}
d_g(0,x_k)^{\frac{n-2}2} u(x_k)\to \infty \quad \mbox{as }k\to \infty.
\ee
We shall divide the remaining proof into four steps.

\medskip

\textbf{Step 1. Blow-up analysis.}

\medskip

\textbf{Claim:} The sequence  $x_k$ in \eqref{eq:contra-hy-1} can be selected to be local maximum points of $u$.

The proof of this fact is standard and we briefly describe it for readers' convenience. Set
$$f_k(y)=u(y)(d_k-d_g(y,x_k))^{\frac{n-2}2}\quad \mbox{for } d_g(y,x_k)\le d_k, $$
where $d_k=d_g(x_k,0)/2$.
Clearly, $f_k(x_k)\to \infty$ and $f_k=0$ on $\partial \B_d(x_k)$. Let $f_k(\hat x_k)$ be a maximum of $f_k$ on $\B_{d_k}(x_k)$ and set
$$\alpha_k=\frac 12(d_k-d_g(\hat x_k,x_k)).$$ By the definition of $\hat x_k$ we have
\be \label{eq:scale_to_Rn}
u(\hat x_k)(2\alpha_k )^{\frac{n-2}2}\ge u(x_k)d_k^{\frac{n-2}2}\to  \infty
\ee
and for $y\in \B_{\alpha_k}(\hat x_k)$,
\begin{equation}\label{u-up}
u(y)\le u(\hat x_k)(\frac{2\alpha_k}{d_k-d_g(y,x_k)})^{\frac{n-2}2}\le  u(\hat x_k) 2^{\frac{n-2}2},
\end{equation}
where we have used  $d_k-d_g(y,x_k)\ge d_k-d_g(\hat x_k,x_k)-d_g(\hat x_k,y)\ge \alpha_k $ in the last inequality.

As a consequence of \eqref{eq:scale_to_Rn}, \eqref{u-up}, the sequence of functions $\hat v_k$ defined by
$$\hat v_k(y)=u(\hat x_k)^{-1}u(\exp_{\hat x_k}u(\hat x_k)^{-\frac{2}{n-2}}y)$$
 has a subsequence (still denoted as $v_k$) that converges in $C^2_{loc}(\mathbb R^n)$ to $U$ of
\begin{equation}\label{eq:limit-bubble}
\Delta U+n(n-2)U^{\frac{n+2}{n-2}}=0\quad \mbox{in } \mathbb R^n.
\end{equation}
By the classification theorem of Caffarelli-Gidas-Spruck \cite{CGS},
\[
U(y)= \Big(\frac{\lda}{1+\lda^2 |y-y_0|^2}\Big)^{\frac{n-2}{2}}
\]
for some $\lda\ge 1$ and $y_0\in \R^n$.  Since $y_0$ is the maximal point of $U$ and $\nabla^2 U(y_0)$ is negative definite, there exist $y_k\to y_0$ where $y_k$ is a local maximum of $v_k$. Thus from the beginning we can assume $x_k$ to be the pre-image of $y_k$. Thus $x_k$ is a local maximum point of $u$ which also satisfies
\be \label{eq:scale_to_Rn-1}
u(x_k)\alpha_k ^{\frac{n-2}2}\to  \infty \quad \mbox{as }k\to \infty.
\ee

We shall use the conformal normal coordinates centered at $x_k$.
Namely, we can  find a smooth positive function $\kappa_k$ to deform the metric conformally: $\bar g:= \kappa_k^{-\frac{4}{n-2}} g$. In
the normal coordinates  of $\bar g$ centered at $x_k$ there holds
\[
\det \bar g(x)= 1 \quad \mbox{for }|x|<\delta,
\]
where $\delta >0$ is independent of $k$; see Cao \cite{Cao} and G\"unther \cite{G}.  (Without causing much confusion, we did not label $k$ to $\bar g$.) 
It is easy to check that
\be \label{eq:conformal-normal}
\kappa_k(0)=1 \quad \mbox{and} \quad \nabla \kappa_k(0)=0.
\ee
Let $u_k=\kappa_k u$, then $u_k$ satisfies the following equation based on the conformal invariant property of $L_g$:
\[
-L_{\bar g} u_k(x)= n(n-2) u_k(x)^{\frac{n+2}{n-2}} \quad \mbox{in }B_{\delta }\setminus \{z_k\},
\]
where $z_k$ is the singular point in the new coordinate and $\delta$ is a positive small number. For scaling we set
 $M_k= u_k(x_{k}) $ and
\[
v_k(y)= M_k^{-1} u_k(\exp_{x_k} (M_k^{-\frac{2}{n-2}} y)).
\]
Then the conformal invariant property further carries us to the equation for $v_k$:
\be \label{eq:rescaled}
-L_{ g_k} v_k(y)= n(n-2)  v_k(y)^{\frac{n+2}{n-2}} \quad \mbox{in }B_{\delta  M_k^{\frac{2}{n-2}} }\setminus \{ S_k\},
\ee
where $(g_k)_{ij}(y)=\bar g_{ij}(M_k^{-\frac{2}{n-2}}  y)$ and $ S_k=M_k^{-\frac{2}{n-2}}  z_k $.
By the discussion about the location of $x_k$ we have
\[
|S_k | \to \infty
\]
 and
\[
v_k(y) \to U(y) \quad \mbox{in }C_{loc}^2 (\R^n)
\]
as $k\to \infty$, where  $U\ge 0$ satisfies \eqref{eq:limit-bubble}.
Since $x_k$ is a local maximum of $u$ and \eqref{eq:conformal-normal} holds, we have
\[
U(0)=1\quad \mbox{and} \quad \nabla U(0)=0.
\]
By the classification theorem of Caffarelli-Gidas-Spruck \cite{CGS},
\[
U(y)= \left(\frac{1}{1+|y|^2}\right)^{\frac{n-2}{2}}.
\]

\medskip

Before further investigation we mention two lower bounds of $v_k$ which will be used in two different occasions. The first one is
\begin{equation}\label{vk-lb-2}
v_k(y)\ge \Lambda M_k^{-1},\quad |y|\le \delta M_k^{\frac{2}{n-2}},
\end{equation}
which follows from the maximum principle and the definition of $v_k$. Indeed, choose $\delta>0$ small so that the first eigenvalue of $-L_g$ is positive in $B_{\delta}$ (with respect to Dirichlet boundary condition). Let $u\ge \Lambda_1>0$ on $\partial B_{\delta}$ and $\phi$ be the solution of
$L_g \phi=0$ in $B_{\delta}$ with $\phi=\Lambda_1$ on $\partial B_{\delta}$. Then we see that $u\ge \phi$ by the maximum principle and $\phi>\Lambda>0$ for some $\Lambda\in (0,\Lambda_1)$ by standard Harnack inequality. Thus (\ref{vk-lb-2}) holds. The second lower bound is stated in the following proposition:

\begin{prop}\label{prop:l-b-1} There exists $C>0$ independent of $k$ such that
\be \label{low-b-v}
v_k(y) \ge \frac{1}{C} (1+|y|)^{2-n} \quad \mbox{for }y\in  B_{\delta M_k^{\frac{2}{n-2}}}.
\ee
\end{prop}

\begin{proof}  Fix $\tau>0$ so that $-L_g$ is coercive in $H_0^1(B_{\tau})$.  We assume $\delta<\tau/2$.
 Let $G_k$ be the solution of
\[
-L_{g} G_k(x)= \delta_{x_k} \quad \mbox{in }B_{\tau}, \quad G_k(x)=0 \quad \mbox{on }\pa B_{\tau},
\]
where $\delta_{x_k}$ is the Dirac measure centered at $x_k$. Then $G_k$ satisfies
\begin{equation} \label{6new}
\begin{split}
&\frac 1A |y|^{2-n}\le G_k(\exp_{x_k}y)\le A
|y|^{2-n} \quad \mbox{for }
y\in B_{ \delta}\setminus\{0\}, \\
&\lim_{y\to 0} G(\exp_{x_k}y)|y|^{n-2}=\frac 1{  (n-2) \omega_n},
\end{split}
\end{equation}
where $\omega_n$ denotes the volume of the
standard $(n-1)$-sphere and $A>0$ is independent of $k$.
Since $v_k(y)\to U(y)$ as $k\to \infty$ for $|y|=1$,  there exists $C>0$ independent of $k$ such that
\begin{align*}
u \ge \frac{1}{C} G_k  \quad \mbox{on } \pa \B_{M_k^{-\frac{2}{n-2}}}(x_k).
\end{align*}
By the comparison principle, we have $u\ge \frac{1}{C} G_k$ in $B_{\tau}\setminus \B_{M_k^{-\frac{2}{n-2}}}(x_k)$. Hence,
\begin{align*}
v_k(y) &\ge \frac{1}{C} M_k^{-1} G_k(\exp_{x_k} M_k^{-\frac{2}{n-2}}y)\\&
 \ge \frac{1}{AC} M_k^{-1} M_k |y|^{2-n} =\frac{1}{AC} |y|^{2-n} \quad \mbox{for }|y|\ge 1.
\end{align*}
When $|y|\le 1$, we used $v_k\to U$ again to have the lower bound. Therefore, the proposition is proved.
\end{proof}

Recall that in the  conformal normal coordinates,
$$
\bar g_{ij}(x)=\delta_{ij}+O(|x|^2), \quad \det \bar g=1, \quad   R_{\bar g}(x)=O(|x|^2).$$
It follows that
$$\Delta_{\bar g}=\partial_i(\bar g^{ij}\partial_j)=\Delta+b_j\partial_j+d_{ij}\partial_{ij}, $$
where
\[
b_j(x)=\pa_i \bar g^{ij}(x)=O(|x|),
\] and
\[
d_{ij}(x)=\bar g^{ij}(x)-\delta_{ij}=O(|x|^2).
\]
Thus
\[ \label{v-eq}
-L_{g_k} =-\Delta_{g_k}  +c(n)R_{g_k}  = -\Delta  -\bar  b_j\partial_j -\bar d_{ij}\partial_{ij} +\bar c ,
\]
where $c(n)=\frac { (n-2) }{ 4(n-1) }$,
\be \label{rough}
\begin{split}
& \bar b_j(y)=M_k^{-\frac{2}{n-2}}b_j(M_k^{-\frac{2}{n-2}}y)=O(M_k^{-\frac{4}{n-2}})|y|,\\
& \bar d_{ij}(y)=d_{ij}(M_k^{-\frac{2}{n-2}}y)=O(M_k^{-\frac{4}{n-2}})|y|^2,\\
&\bar  c(y)= c(n)R_{\bar g}(M_k^{-\frac{2}{n-2}}y)M_k^{-\frac{4}{n-2}}= O(M_k^{-\frac{8}{n-2}})|y|^2.
\end{split}
\ee
Note that the subscripts $k$ are dropped for convenience.
The equation of $v_k$ becomes
\begin{equation}\label{vk-e2}
(\Delta+\bar b_j\partial_j+\bar d_{ij}\partial_{ij}-\bar c)v_k+n(n-2)v_k^{\frac{n+2}{n-2}}=0\quad \mbox{in }B_{\delta M_k^{\frac 2{n-2}}}\setminus \{S_k\}.
\end{equation}

\medskip

\textbf{Step 2. Setting up the moving spheres framework.}

\medskip

For  $\lambda>0$ and any function $v$, define
$$v^{\lambda}(y):=\Big(\frac{\lambda}{|y|}\Big)^{n-2}
v(y^\lambda),  \qquad y^\lambda:=\frac{\lambda^2y}{|y|^2} $$
as the Kelvin transformation of $v$ with respect to $\partial B_{\lambda}$. To carry out the method of moving spheres we restrict our discussion on $\Sigma_{\lambda}\setminus \{S_k\}$,  where $\Sigma_{\lambda}$ is defined as
$$
\Sigma_{\lambda}:=B_{\delta M_k^{\frac{2}{n-2}}} \setminus
\bar B_ \lambda =\{y\ |\ \lambda<|y|<
 \delta  M_k^{\frac{2}{n-2}}\}.
$$
Let
$$
w_{\lambda}(y):= v_k(y)-v_k^{\lambda}(y),\qquad y\in \Sigma_{\lambda}\setminus \{S_k\}.
$$
A straight forward computation yields
\begin{equation}
\Delta w_{\lambda}+\bar b_i\partial_i w_{\lambda}
+\bar d_{ij}\partial_{ij}w_{\lambda}
-\bar cw_{\lambda}+n(n+2)\xi^{\frac 4{n-2}}w_{\lambda}=
E_{\lambda} \quad\mbox{in } \Sigma_\lambda\setminus \{S_k\},
\label{3eq5}
\end{equation}
where $\xi>0$ is given by
\begin{equation}
n(n+2)\xi^{ \frac 4{n-2}}=\left\{\begin{array}{ll}
n(n-2) \frac { v_k^{ \frac{n+2}{n-2} }-(v_k^\lambda)^{ \frac{n+2}{n-2} } }
{ v_k-v_k^\lambda },\quad v_k\neq v_k^{\lambda},\\
\\
n(n+2)v_k^{\frac{4}{n-2}},\quad v_k=v_k^{\lambda},
\end{array}
\right.
\label{stays}
\end{equation}
and
\begin{align}
E_{\lambda}(y)=& \left(\bar c(y)v_k^{\lambda}(y)- (\frac{\lambda}{|y|})^{n+2}
\bar c(y^{\lambda})v_k(y^{\lambda})\right)
-(\bar b_j\partial_j v_k^{\lambda}+\bar d_{ij}\partial_{ij}v^{\lambda}_k)
\nonumber\\
& +(\frac{\lambda}{|y|})^{n+2}\left(\bar b_j(y^{\lambda})
\partial_j v_k(y^{\lambda})+
\bar d_{ij}(y^{\lambda})\partial_{ij}v_k(y^{\lambda})\right).
\label{eQ}
\end{align}
Here we note that we shall always require $\lambda\in [1/2,2]$. Since $|S_k| \to \infty$ as $k\to \infty$,  $v_k$ is smooth in $B_{\lda}$ and $v_k^{\lambda}$ is smooth in $\Sigma_{\lambda}$.

The following estimate of $E_{\lambda}$ is crucial to the construction of auxiliary functions in the sequel. Since it is related to the smallness of $v_k-U$ in $B_2$, we use the a notation to represent this quantity:
\be
\sigma_k:=
\|v_k-U\|_{C^2(B_2)}, \quad \sigma_k\to 0 \quad \mbox{as }k\to \infty. 
\ee

\begin{prop}\label{prop:lz2} Let $E_{\lambda}$ be defined in (\ref{eQ}), then for
 $\lambda\in [1/2,2]$ and $y\in \Sigma_\lambda$, we have
            \begin{equation} \label{eq:E-lambda}
|E_\lambda| \le
 C_0 M_k^{-\frac{8}{n-2}}|y|^{4-n}
+C_0 \sigma_k
M_k^{ -\frac 4{n-2} } |y|^{-n},
\end{equation}
where  $C_0>0$ is some constant independent of $y$ and $k$.
\label{prop2new}
\end{prop}

Proposition \ref{prop:lz2} is an easy corollary of  Proposition 2.1 of \cite{LZ2}. We include a proof here for readers' convenience.

\begin{proof}[Proof of Proposition \ref{prop:lz2}]
First we estimate the second term of $E_{\lambda}$:
$$I:=(\bar b_j\partial_j v_k^{\lambda}+\bar d_{ij}\partial_{ij}v^{\lambda}_k).$$
Since in  the conformal normal coordinates
\begin{equation}
0=(\Delta_{g_k}-\Delta)w=
(\bar b_j\partial _j+\bar d_{ij}\partial_{ij})w
\label{rrr}
\end{equation}
for any smooth radial function $w(y)$,
we have
$$
I=(\bar b_j\partial_j+\bar d_{ij}\partial_{ij})
[ (v_k-U)^\lambda].
$$
By a direct computation,
$$
\partial_j
\bigg\{  (\frac \lambda{|y|})^{n-2}  (v_k-U)(y^\lambda)\bigg\}
=
\partial_j
\bigg\{  (\frac \lambda{|y|})^{n-2} \bigg\}  (v_k-U)(y^\lambda)
+
  (\frac \lambda{|y|})^{n-2}
\partial_j \bigg\{   (v_k-U)(y^\lambda)
\bigg\},
$$
\begin{align*}
&\partial_{ij}\bigg\{  (\frac \lambda{|y|})^{n-2}  (v_k-U)(y^\lambda)\bigg\}
\\&=\partial_{ij}\bigg\{  (\frac \lambda{|y|})^{n-2} \bigg\}  (v_k-U)(y^\lambda)+
\partial_i \bigg\{  (\frac \lambda{|y|})^{n-2} \bigg\}
\partial_j  \bigg\{    (v_k-U)(y^\lambda)  \bigg\}\\ & \quad
+ \partial_j
\bigg\{  (\frac \lambda{|y|})^{n-2} \bigg\}
\partial_i
 \bigg\{    (v_k-U)(y^\lambda)  \bigg\}
+
(\frac \lambda{|y|})^{n-2}
\partial_{ij}
 \bigg\{    (v_k-U)(y^\lambda)  \bigg\}.
\end{align*}
Since  $\bar d_{ij}\equiv \bar d_{ji}$, using \eqref{rrr} with $w=(\frac \lambda{|y|})^{n-2}$ we have
\begin{align*}
I=&
(\frac \lambda{|y|})^{n-2}
\bar b_j \partial_j
 \bigg\{    (v_k-U)(y^\lambda)  \bigg\}
+2\bar d_{ij} \partial_i
\bigg\{  (\frac \lambda{|y|})^{n-2} \bigg\}
\partial_j
 \bigg\{    (v_k-U)(y^\lambda)  \bigg\}
\\&+(\frac \lambda{|y|})^{n-2}
\bar d_{ij} \partial_{ij}
 \bigg\{    (v_k-U)(y^\lambda)  \bigg\}.
\end{align*}
To evaluate terms in $I$, we observe that for $z\in B_2$,
\begin{equation} \label{13-1}
\begin{split}
(v_k-U)(z)&=O(\sigma_k)|z|^2, \\
|\nabla_z (v_k-U)(z)|&=O(\sigma_k)|z|,  \\ |\nabla_z^2(v_k-U)(z)|&=O(\sigma_k),
\end{split}
\end{equation}
where we have used  $(v_k-U)(0)=|\nabla (v_k-U)(0)|=0$.
It follows from \eqref{rough} and (\ref{13-1}) that
\be \label{eq:error-1}
\begin{split}
I&= O(1) \sigma_k  M_k^{ -\frac 4{n-2} } \Big(|y|^{2-n}  |y| |y^\lda | |\nabla_y y^\lda| +|y|^2|y|^{1-n} |y^{\lda}|  |\nabla_y y^\lda| \\& \quad +|y|^{2-n} |y|^2  (|y^\lda | |\nabla_y^2 y^\lda| +|\nabla_y y^\lda| ^2 ) \Big) \\& = O(1)\sigma_k
M_k^{ -\frac 4{n-2} } |y|^{-n}.
\end{split}
\ee
Similarly,
$$
(\frac{\lambda}{|y|})^{n+2}\left(\bar b_j(y^{\lambda})
\partial_j v_k(y^{\lambda})+
\bar d_{ij}(y^{\lambda})\partial_{ij}v_k(y^{\lambda})\right)
= O(1)\sigma_k
M_k^{ -\frac 4{n-2} } |y|^{-n}
$$
and
$$
 |\bar c(y)||v_k^\lambda(y)-U^\lambda(y)|
+(\frac \lambda{|y|})^{n+2}
|\bar c(y^\lambda)||v_k(y^\lambda)-U(y^\lambda)|
=O(1)\sigma_k
M_k^{ -\frac 4{n-2} } |y|^{-n}.
$$
Finally, the estimate on $\bar c$ gives
$$\bar c(y)U^{\lambda}(y)-(\frac{\lambda}{|y|})^{n+2}\bar c(y^{\lambda})U(y^{\lambda})=O(M_k^{-\frac{8}{n-2}})|y|^{4-n}. $$

Therefore, Proposition \ref{prop2new} is established.

\end{proof}

\medskip

\textbf{Step 3. Constructing an auxiliary function. }

\medskip

If $n=6$, \eqref{eq:E-lambda} reads
\begin{equation}\label{e-lam-6}
|E_{\lda}(y)|\le C_0 \sigma_kM_k^{-1}|y|^{-6}+C_0 M_k^{-2}|y|^{-2}\quad \mbox{for }y\in \Sigma_{\lda}.
\end{equation}


Consider the linear ordinary differential equation
\be \label{eq:h-1}h_{\lambda}''(r)+\frac{5}{r}h_{\lambda}'(r)=-2C_0\sigma_kM_k^{-1}r^{-6}-2C_0M_k^{-2}r^{-2}\quad \mbox{for }\lambda<r<\delta M_k^{\frac 12},
\ee
with the initial data
\begin{equation}\label{h-lam-2}
h_{\lambda}(\lambda)=h_{\lambda}'(\lambda)=0.
\end{equation}
It is easy to find out that
\begin{equation} \label{eq:h-lda}
h_{\lambda}(r)=\frac{C_0}{2}\sigma_k M_k^{-1}(r^{-4}\ln \frac r{\lambda}+\frac{r^{-4}}4-\frac{\lambda^{-4}}4)-\frac{C_0}{2}M_k^{-2}(\ln \frac{r}{\lambda} +\frac 14(\frac{\lambda^4}{r^4}-1))
\end{equation}
is the unique solution. An immediate observation is that $h_\lda(r)\le 0$ because $h_\lda$ is super-harmonic, (\ref{h-lam-2}) forces $h_{\lambda}$ to be negative for $r>\lambda$.

Setting $h_{\lambda}(y)=h_{\lambda}(|y|)=h_{\lambda}(r)$, we have
$$\Delta_{g_k}h_{\lambda}=\Delta h_{\lambda}=h_{\lambda}''(r)+\frac 5rh_{\lambda}'(r)=-2C_0\sigma_kM_k^{-2}|y|^{-2}-2C_0M_k^{-1}r^{-6}. $$
By \eqref{e-lam-6},
\be \label{eq:AF-1}
\Delta_{g_k}h_{\lambda}+E_{\lambda} <-C_0 \sigma_kM_k^{-1}|y|^{-6} -C_0 M_k^{-2}|y|^{-2}\quad \mbox{in }\Sigma_{\lambda}.
\ee
Next, we verify that
\begin{equation}\label{h-must}
(\Delta_{g_k}-\bar c+48\xi)h_{\lambda}(y)+E_{\lambda}(y)<0 \quad \mbox{for }y\in \om_{\lda},
\end{equation}
where
\[
 \om_{\lda}:=\left\{y\in  \Sigma_{\lambda}\setminus \{S_k\}|  v_k(y)<2v_k^{\lambda}(y)+2|h_{\lambda}(y)|\right\} .
\]
Indeed, since $|\bar c|\le AM_k^{-2}|y|^2$ for some $A>0$, by the lower bound of $v_k$ in (\ref{low-b-v}),
\be  \label{eq:positive-32}
48\xi-\bar c\ge C|y|^{-4}-AM_k^{-2}|y|^2>0,\quad \mbox{if}\quad |y|<\delta_1 M_k^{1/3}
\ee
for some $\delta_1>0$. On the other hand, for $|y|\in [\delta_1 M_k^{1/3},\delta M_k^{\frac 12})$ and large $k$,
\begin{align} \label{eq:outer}
2v_k^{\lambda}(y)+2|h_{\lambda}(y)|  \le C |y|^{-4} + C\sigma_k M_k^{-1}+ C M_k^{-2} \ln |y| < \Lambda M_k^{-1} \le v_k(y),
\end{align}
where we have used \eqref{vk-lb-2} in the last inequality. Hence,
\be \label{eq:positive-33}
\om_\lda \subset B_{\delta_1 M_k^{1/3}}.
\ee
Since $h_{\lambda}\le 0$,  \eqref{h-must} follows immediately from \eqref{eq:AF-1}, \eqref{eq:positive-32} and \eqref{eq:positive-33}.

\medskip

\textbf{Step 4. Completing the proof of the upper bound of $u$. }

\medskip

The benchmark of the moving sphere method is the following inequality that can be verified by direct computation:
\be \label{eq:f-1}
U(y)-U^\lda(y) > (=, <) 0 \quad \mbox{for }|y|>\lda, \quad \mbox{if }\lda<(=,>)1.
\ee

First we show that
\begin{equation}\label{v-k-s}
w_{\lambda_0}+h_{\lambda_0}>0 \quad \mbox{in}\quad \Sigma_{\lambda_0}\setminus \{S_k\}, \quad \mbox{for}\quad \lambda_0\in [\frac 12,\frac 35].
\end{equation}

The proof of (\ref{v-k-s}) starts from (\ref{eq:f-1}): For $\lambda_0\in [\frac 12,\frac 35]$, there is a universal constant $\epsilon_0>0$ such that
$$U(y)-U^{\lambda_0}(y)>\epsilon_0(|y|-\lambda_0)|y|^{-5}\quad \mbox{for } |y|>\lambda_0. $$
By the convergence of $v_k$ to $U$ in $C^2_{loc}(\R^n)$,  for any fixed $R>>1$,
\be \label{eq:ms-start-1}
v_k(y)-v_k^{\lambda_0}(y)>\frac{\epsilon_0}2(|y|-\lambda_0)|y|^{-5}, \quad \mbox{if } \lambda_0<|y|<R
\ee
and $k$ is sufficiently large. In particular for $|y|=R$,
\be \label{eq:ms-start-2}
v_k(y)\ge (1-\frac{\epsilon_0}2)|y|^{-4} \quad \mbox{and}\quad  v_k^{\lambda_0}(y)\le (1-3\epsilon_0)|y|^{-4}\quad \mbox{for } |y|=R.
\ee
Thus the gap between $v_k$ and $v_k^{\lambda_0}$ is enough to engulf $h_{\lambda_0}$. By
the explicit expression of $h_{\lda}$ and \eqref{eq:ms-start-1}, we see that for large $k$
$$w_{\lambda_0}(y)+h_{\lambda_0}(y)>0\quad \mbox{for } \lambda_0<|y|<R. $$
To prove \eqref{v-k-s} for $R<|y|<\delta M_k^{\frac 12}$, we first determine an upper bound for $\bar c$ and construct a test function $\phi$ over this region.
By \eqref{rough}, we can find $A>0$ to have
\begin{equation}\label{bound-c}
|\bar c|\le AM_k^{-2}|y|^2.
\end{equation}
Then we set
$$\phi(y)=(1-\epsilon_0)|y|^{-4}+\frac{\Lambda}{2M_k}+AM_k^{-2}|y|^2, \quad R<|y|<\delta M_k^{1/2},$$
where $\Lda$ is the constant in \eqref{vk-lb-2}.
It is easy to check that
$$L_{g_k}\phi=\Delta_{g_k}\phi-\bar c\phi=\Delta \phi-\bar c\phi\ge \frac{7A}{M_k^2}-AM_k^{-2}|y|^{-2}-A\Lambda |y|^2M_k^{-3}.$$
By choosing $\delta >0$ small enough (independent of $k$ when $k$ is large), we have
\[
L_{g_k}\phi >0,\quad R<|y|<\delta M_k^{1/2}.
\]
Then the standard maximum principle gives $v_k\ge \phi$ on this annulus because $L_{g_k}(v_k-\phi)<0$ and (see (\ref{eq:ms-start-2}) and (\ref{vk-lb-2}))
\[
v_k >\phi \quad \mbox{on }\partial B_{\delta M_k^{1/2}}\cup \partial B_R.
\]

Since
\begin{equation}\label{ub-vlam}
v_k^{\lambda_0}(y)\le (1-2\epsilon_0)|y|^{-4} \quad \mbox{for } |y|>R
\end{equation}
for large $k$ and $R$, we have
\[
v_k^{\lambda_0}(y)-h_{\lambda_0}\le \phi(y)\quad \mbox{for } |y|>R.
\]
Hence, we conclude that \eqref{v-k-s} holds because
$$v_k(y)-v_k^{\lambda_0}(y)+h_{\lambda_0}(y)>0\quad \mbox{for }R<|y|<\delta M_k^{1/2}. $$

The critical position in the moving sphere method is defined by
$$\bar \lambda:=\sup\{\lambda \in [1/2, 2]|  v_k(y)>v_k^{\mu}(y)-h_{\mu}(y), \quad \forall ~y\in \Sigma_{\mu}\setminus \{S_k\} \mbox{ and } 1/2<\mu<\lambda \}.$$
By \eqref{v-k-s}, $\bar \lda$ is well-defined. In order to reach to the final contradiction we claim that $\bar \lda=2$.

If $\bar \lambda<2$,  by \eqref{eq:outer} we still have $v_k>v_k^{\bar \lambda}
-h_{\bar \lambda}$ on $\partial B_{\delta M_k^{\frac 12}}$. By the maximum principle,  $v_k-v_k^{\bar \lambda}+h_{\bar \lambda}$ is strictly positive in $\Sigma_{\bar \lambda}$ and $\frac{\pa }{\pa r}(v_k-v_k^{\bar \lambda}+h_{\bar \lambda})>0$ on $\partial B_{\bar \lambda}$.  By a standard argument in moving spheres method,  we can move spheres a little further than $\bar \lambda$. This contradicts the definition of $\bar \lambda$. Therefore the claim is proved.

Sending $k$ to $\infty$ in the inequality
\[
v_k(y)>v_k^{\bar \lda }(y)-h_{\bar \lda}(y) \quad \mbox{for }\bar \lda <|y|<\delta M_k^{1/2},
\]
we have
\[
U(y) \ge U^{\bar \lda }(y) \quad \mbox{for } \bar \lda <|y|,
\]
which is a clear violation of \eqref{eq:f-1} because $\bar \lambda=2$.
This contradiction concludes the proof of Theorem \ref{sphe-har}.

\end{proof}

\begin{cor} \label{cor:sphere-harnack} Under the same assumptions in Theorem \ref{sphe-har}, we have
\[
\max_{r/2\le |x|\le 2r } u\le C_1 \min _{r/2\le |x|\le 2r } u
\]
for every $0<r<1/4$, where $C_1$ is independent of $r$. Moreover, for $0<|x|<1/4$,
\[
|\nabla u(x)| \le C_1|x|^{-1} u(x),
\]
\[
|\nabla^2 u(x)| \le C_1|x|^{-2} u(x).
\]

\end{cor}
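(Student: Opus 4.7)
The plan is to reduce the corollary to standard interior elliptic estimates on a fixed annulus by rescaling around the singularity. For $r \in (0, 1/4)$, define
\[
v_r(y) = r^{(n-2)/2} u(ry), \qquad y \in A := \{1/4 \le |y| \le 4\},
\]
and let $(g_r)_{ij}(y) := g_{ij}(ry)$. Since $\Delta_{g_r} v_r(y) = r^{(n+2)/2}(\Delta_g u)(ry)$ and $R_{g_r}(y) = r^2 R_g(ry)$, a direct computation gives $-L_{g_r} v_r = n(n-2) v_r^{(n+2)/(n-2)}$ on $A$. Because $g$ is smooth at $0$, the metrics $g_r$ are uniformly $C^k$-bounded and uniformly elliptic on $A$, independent of small $r$; in particular, $L_{g_r}$ is a uniformly elliptic operator with uniformly bounded coefficients. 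Theorem \ref{sphe-har} (together with $d_g(x,0) \sim |x|$) then yields $v_r \le C$ on $A$, with $C$ independent of $r$.

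Next, I would rewrite the equation in the linear form
\[
\Delta_{g_r} v_r + \bigl(n(n-2) v_r^{4/(n-2)} - c(n) R_{g_r}\bigr) v_r = 0,
\]
whose zeroth-order coefficient is uniformly bounded by the previous step. The classical Harnack inequality on the connected annulus $\{1/2 \le |y| \le 2\}$, applied to this positive solution of a uniformly elliptic linear equation with uniformly bounded coefficients, gives $\sup v_r \le C_1 \inf v_r$ with $C_1$ independent of $r$. Undoing the rescaling (i.e., writing $u(x) = r^{-(n-2)/2} v_r(x/r)$ for $r/2 \le |x| \le 2r$) produces the first assertion of the corollary.

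For the pointwise derivative bounds, I would combine interior Schauder estimates with the same Harnack bound. For any $y_0 \in \{1/2 \le |y| \le 2\}$ and a small ball $B_\rho(y_0) \subset A$, interior Schauder estimates for the above linear equation give $|\nabla v_r(y_0)| + |\nabla^2 v_r(y_0)| \le C\|v_r\|_{L^\infty(B_\rho(y_0))}$, and Harnack yields $\|v_r\|_{L^\infty(B_\rho(y_0))} \le C v_r(y_0)$. Thus $|\nabla v_r(y_0)| + |\nabla^2 v_r(y_0)| \le C v_r(y_0)$. Translating back via $|\nabla u(x)| = r^{-n/2}|\nabla v_r(x/r)|$, $|\nabla^2 u(x)| = r^{-(n+2)/2}|\nabla^2 v_r(x/r)|$, and $u(x) = r^{-(n-2)/2} v_r(x/r)$ for $|x| \in [r/2, 2r]$ delivers the desired bounds $|\nabla u(x)| \le C|x|^{-1} u(x)$ and $|\nabla^2 u(x)| \le C|x|^{-2} u(x)$. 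No genuine obstacle arises; the only point to verify carefully is that $L_{g_r}$ has uniform ellipticity and coefficient bounds on $A$ as $r \to 0$, which is immediate from the smoothness of $g$ near $0$.
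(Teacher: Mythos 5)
Your argument is correct and is exactly the standard rescaling argument the paper invokes (the paper sets $v(y)=r^2u(ry)$, which is $r^{(n-2)/2}u(ry)$ for $n=6$, and omits the details). The reduction to a linear uniformly elliptic equation with bounded coefficients via Theorem \ref{sphe-har}, followed by Harnack and interior Schauder estimates on a fixed annulus, is precisely what is intended.
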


\begin{proof} The corollary follows from Theorem \ref{sphe-har}  by using the standard local estimates for  the rescaled function $v(y)=r^{2}u(ry)$. We omit the details.

\end{proof}

\section{Lower bound and removability}
\label{s:3}

In this section, we shall show that either $0$ is a removable singularity or $u(x)|x|^{\frac{n-2}2}\ge c$ for some $c>0$ when $n=6$. This is based on a delicate analysis using  the Pohozaev identity.

We shall make a conformal change of the metric around the origin.
Suppose that  $\{y_1,\dots,y_n\}$ is a conformal normal coordinates system centered at $0$. Using the polar coordinates, we have
\[
g=\ud r^2 +r^2h(r,\theta),
\]
where $h$ is a metric on $\mathbb{S}^{n-1}$ and $\det h=1$, $r=|y|$ and $\theta= \frac{y}{|y|}$.
 Let
 \[
 f(r)= (1-r^2)^{-\frac{n-2}{2}},
 \] which is a solution of
\begin{equation} \label{eq:negative-case}
\Delta f=n(n-2) f^{\frac{n+2}{n-2}}.
\end{equation}  Let
\[
\tilde g= f^{\frac{4}{n-2}} g
\]
be a conformal metric of $g$, then the conformal covariance property of $L_g$ gives
\be
\begin{split}
c(n)R_{\tilde g}&=-L_{\tilde g}(1)= -f^{-\frac{n+2}{n-2}}L_{g} f\\&=  -f^{-\frac{n+2}{n-2}} (\Delta f +c(n)R_g f) = -n(n-2) +O(|y|^2),
\end{split}
\ee
where $|R_g|\le Cr^2$ in the conformal normal coordinates was used.
We shall use geodesic normal polar coordinates of $\tilde g$, in which
\be \label{eq:metric-conformal}
\tilde g= f^{\frac{4}{n-2}}\ud r^2 +f^{\frac{4}{n-2}} r^2h(r,\theta)= \ud \rho^2 +\rho^2 \tilde h(\rho, \theta),
\ee
where $\rho=\frac12 \ln \frac{1+r}{1-r}$ and
\[
\sqrt{\det \tilde h} = \sqrt{ \det \tilde g}= f^{\frac{2n}{n-2}}=(1-r^2)^{-n}=:\zeta(\rho).
\]
Then  the Laplace-Beltrami operator can be written as
\begin{align}
\Delta_{\tilde g} &=\pa_\rho^2+\frac{1}{\rho^{n-1} \zeta } \pa_\rho ( \rho^{n-1} \zeta  )\pa_\rho +\frac{1}{\rho^2} \Delta_{\tilde h} \nonumber \\&
=\pa_\rho^2 +\frac{n-1}{\rho }\pa_\rho +\pa_\rho \ln \zeta  \pa_\rho +\frac{1}{\rho^2} \Delta_{\tilde h}.
\label{eq:laplacian}
\end{align}
Suppose $u$ is a positive solution of
\begin{equation}\label{Y-10}
-L_{\tilde g} u=n(n-2) u^{\frac{n+2}{n-2}} \quad \mbox{in }B_1\setminus \{0\},
\end{equation}
 $\{x_1,\dots, x_n\}$ is a normal coordinates system of $\tilde g$ centered at $0$, we let
  \begin{equation*}
P(r, u):=\int_{\partial B_{r}}\left(\frac{n-2}{2} u \frac{\partial u}{\partial r}-\frac{1}{2} r|\nabla u|^{2}+r\left|\frac{\partial u}{\partial r}\right|^{2}+\frac{(n-2)^{2}}{2} r u^{\frac{2 n}{n-2}}\right) d S_{r}
\end{equation*}
be the Pohozaev integral, where $d S_{r}$ is the standard area measure on $\partial B_{r}$. The Pohozaev  identity asserts that, for any $0<s \leq r<1$,
\begin{equation} \label{eq:phozaev}
P(r, u)-P(s, u)=-\int_{s \leq|x| \leq r}\left(x^{k} \partial_{k} u+\frac{n-2}{2} u\right) (L_{\tilde g} u-\Delta u) \ud x.
\end{equation}
By Corollary \ref{cor:sphere-harnack}, we have
\[
\left|(x^{k} \partial_{k} u+\frac{n-2}{2} u) (L_{\tilde g} u-\Delta u) \right | \le C|x|^{2-n},
\]
which implies that the following limit can be defined:
\[
P(u):=\lim_{r\to 0} P(r,u).
\]

\begin{thm}\label{thm:remo} Assume $n=6$ and $u>0$ is a solution of \eqref{Y-10}. Then $P(u)\le 0$ and the equality holds if and only if $0$ is an removable singularity of $u$.
\end{thm}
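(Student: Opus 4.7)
My plan is to transport the problem to Emden--Fowler cylindrical variables. Writing the $\tilde g$-polar form \eqref{eq:metric-conformal} as $(\rho,\theta)\in(0,\rho_0)\times S^{n-1}$, set $t=-\log\rho$ and
\[
w(t,\theta):=\rho^{(n-2)/2}u(\rho,\theta).
\]
Theorem \ref{sphe-har} and Corollary \ref{cor:sphere-harnack} give $w$, $w_t$, $\nabla_\theta w$ bounded on the half-cylinder $[T_0,\infty)\times S^{n-1}$. Using \eqref{eq:laplacian} together with $\zeta(\rho)=1+O(\rho^2)$ and the expansion $c(n)R_{\tilde g}=-n(n-2)+O(\rho^2)$ (the latter being exactly the reason for switching to $\tilde g$), equation \eqref{Y-10} becomes
\[
w_{tt}+\Delta_\theta w-\tfrac{(n-2)^2}{4}w+n(n-2)w^{(n+2)/(n-2)}=Q(t,\theta),
\]
where $Q=O(e^{-2t})$ gathers every $O(\rho^2)$ correction from the metric and the scalar curvature, and in dimension $n=6$ is borderline $L^1$ on $[T_0,\infty)$.

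Next I would identify the Pohozaev integral with the Hamiltonian energy of this equation. A direct substitution of $u=\rho^{-(n-2)/2}w$ into the Euclidean Pohozaev integrand on $\partial B_{e^{-t}}$ yields, after the cancellations of the mixed $ww_t$ terms inherent in $a=(n-2)/2$,
\[
P(e^{-t},u)=E(t)+o(1),\qquad E(t):=\int_{S^{n-1}}\!\Bigl[\tfrac{1}{2}w_t^2-\tfrac{1}{2}|\nabla_\theta w|^2-\tfrac{(n-2)^2}{8}w^2+\tfrac{(n-2)^2}{2}w^{2n/(n-2)}\Bigr]\,dS.
\]
Multiplying the $w$-equation by $w_t$ and integrating over $S^{n-1}$ collapses the conservative terms and leaves $E'(t)=\int_{S^{n-1}}Q\,w_t\,dS$, which is integrable on $[T_0,\infty)$; hence $E(\infty)$ exists and equals $P(u)$. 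The theorem reduces to the two statements $E(\infty)\le 0$, and $E(\infty)=0$ iff $0$ is removable.

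For $E(\infty)\le 0$, I would argue dynamically on the cylinder. The unperturbed ($Q\equiv 0$) radial model is Hamiltonian with potential $V(w)=-\tfrac{(n-2)^2}{8}w^2+\tfrac{(n-2)^2}{2}w^{2n/(n-2)}$, satisfying $V(0)=0$ and $V(w_*)=0$ for $w_*=2^{-(n-2)/2}$, dipping strictly below zero on $(0,w_*)$. Energy conservation then forces any bounded positive radial orbit with $E>0$ to reach $\{w=0\}$ with nonzero speed, contradicting $u>0$. To propagate this to the $\theta$-dependent perturbed equation, I would work with the spherical average $\bar w(t):=|S^{n-1}|^{-1}\int_{S^{n-1}}w\,dS$ and compare with autonomous trajectories on long time windows, exploiting $\int_T^\infty|Q|\,dt\to 0$. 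The main obstacle is precisely this comparison: in $n=6$ the bound $|Q|\le Ce^{-2t}$ is sharp for $L^1$ integrability with no margin, and this is where the negative scalar curvature of $\tilde g$ (which absorbed the otherwise non-integrable linear term $-c(n)R_{\tilde g}u=n(n-2)u+O(\rho^2)u$ into the leading operator) becomes indispensable.

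For the removability dichotomy, one direction is immediate: if $0$ is removable, $u$ is smooth at $0$ and $P(r,u)=O(r^{n-2})$ gives $P(u)=0$. Conversely, assume $P(u)=E(\infty)=0$. The autonomous zero-energy set intersected with the positive cone is precisely the stable manifold of the saddle $w\equiv 0$, consisting of orbits with $w(t)\to 0$ as $t\to\infty$; boundedness of $w$ together with $Q\in L^1([T_0,\infty))$ preserves this $\omega$-limit behavior, forcing $w(t,\theta)\to 0$ uniformly in $\theta$. Equivalently, $u(x)=o(|x|^{-(n-2)/2})$ as $x\to 0$, and a standard Brezis--V\'eron / Serrin type removable singularity result applied to $-L_{\tilde g}u=n(n-2)u^{(n+2)/(n-2)}$ extends $u$ continuously across $0$, whence smoothly by elliptic regularity, completing the proof.
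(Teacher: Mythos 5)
Your setup is sound and close in spirit to the paper's: the conformal change to $\tilde g$ with $R_{\tilde g}<0$, the cylindrical variables $t=-\log\rho$, $w=\rho^{(n-2)/2}u$, and the identification of $P(e^{-t},u)$ with a Hamiltonian energy $E(t)$ whose drift is controlled by the metric errors are all present (in slightly different bookkeeping) in the paper's proof. The easy direction (removable $\Rightarrow P(u)=0$) and the final removability step from $w\to 0$ are also fine. The genuine gap is in the middle, and you have in fact flagged it yourself without resolving it: the ``comparison with autonomous trajectories on long time windows.'' Moreover you misdiagnose the nature of the borderline difficulty. It is not the $L^1$-integrability of $Q=O(e^{-2t})$ in $t$ --- that holds with exponential room to spare in every dimension. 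The dangerous scenario is the one where $P(u)\ge 0$, $\liminf w=0$ but $\limsup w>0$, i.e.\ $w$ dips down to values $w(t_i)\to 0$ on necks $[\bar t_i,t_i^*]$ and climbs back up to a fixed $\va_0$. There the quantity you must detect, $|E(t_i)|\sim w(t_i)^2$, itself tends to zero, so it is useless to know that the accumulated energy drift over the neck tends to zero; you must show it is $o\big(w(t_i)^2\big)$, while the length of the neck grows like $\ln(1/w(t_i))$ and $w$ grows back at the sharp exponential rate $\tfrac{n-2}{2}$. Quantitatively, the drift contributed by the neck is of order $w(t_i)^2\int_{\bar t_i}^{t_i^*}e^{-2t}e^{(n-2)|t-t_i|}\,\ud t$, and for $n=6$ the integrand grows like $e^{2t}$ on $[t_i,t_i^*]$; closing the estimate requires (i) two-sided pointwise bounds on $w$ with errors $e^{-4t_i}$ rather than $e^{-2\bar t_i}$ in one direction (the paper's Lemma \ref{point-w}, which rests on the sharpened differential inequality \eqref{eq:crucial} obtained from monotonicity of $w$ on $[\bar t_i,t_i-1]$), and (ii) the resulting length comparison $t_i^*\le 2t_i-\bar t_i+C$ of \eqref{eq:t*}, which for $n=6$ exactly turns the drift into $Cw(t_i)^2e^{-2\bar t_i}$ and allows absorption against \eqref{temp-new-1}. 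None of this is in your proposal, and without it the argument does not close in dimension $6$ (for $n>6$ it provably fails).

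A second, related gap: your removability direction assumes that $P(u)=0$ forces $w(t,\theta)\to 0$ via a ``stable manifold'' argument. For the non-autonomous perturbed system on the cylinder there is no off-the-shelf invariant-manifold theorem, and the zero-energy level set of the autonomous model contains the full homoclinic (bubble) orbit, not only the rest point; ruling out the solution repeatedly shadowing that homoclinic loop is precisely the oscillation scenario above. So the statement ``boundedness of $w$ together with $Q\in L^1$ preserves this $\omega$-limit behavior'' is the theorem, not a proof of it. To repair the proposal you would need to import the paper's Claim 2 machinery essentially in full.
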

 When $n=3,4,5$, Theorem \ref{thm:remo}  was proved by Marques \cite{marques-1} by an argument similar to that of Chen-Lin \cite{ChenLin3} for the prescribing scalar curvature.

\begin{proof}  If $0$ is removable, it is easy to check that $P(u)=0$.   Suppose $P(u)\ge 0$. We will show that $P(u)=0$ and $0$ is removable. Thus the theorem follows.

\medskip

\textbf{Claim 1.}
\be \label{eq:limitinf}
\liminf_{x\to 0} u(x)|x|^{\frac{n-2}{2}}=0.
\ee

With the establishment of the upper bound of $u$, the proof of (\ref{eq:limitinf}) is standard (See page 359 of \cite{marques-1}). Roughly speaking, if $u(x)|x|^{\frac{n-2}2}\ge c$, then for any $r_i\to 0$,
 $v_i(y)=r_i^{\frac{n-2}2}u(r_iy)$ converges along a subsequence to $v$ of
$$\Delta v+n(n-2)v^{\frac{n+2}{n-2}}=0\quad \mbox{in }\R^n\setminus \{0\}, $$
which has a non-removable singularity at the origin. By \cite{CGS}, $v$ is a Fowler solution and $P(v)<0$. Then we obtain a contradiction from
$$0>P(v)=P(1,v)=\lim_{i\to \infty}P(1, v_i)=\lim_{i\to \infty} P(r_i, u)=P(u)\ge 0. $$

\medskip

\textbf{Claim  2.}
\be \label{eq:limit}
\lim_{x\to 0} u(x)|x|^{\frac{n-2}{2}}=0.
\ee

In the geodesic normal poplar coordinates system, using \eqref{Y-10}, \eqref{eq:laplacian} and Corollary \ref{cor:sphere-harnack} we have
\begin{align}\label{main-eq-low}
\bar u_{\rho\rho }+\frac{n-1}{\rho} \bar u_\rho &=\dashint_{\pa B_\rho}\left(-\pa_\rho \ln \zeta  \pa_\rho u  -\frac{1}{\rho^2}\Delta_{\tilde h} u +c(n)R_{\tilde g} u - u^{\frac{n+2}{n-2}}\right),\nonumber\\&
=-\pa_\rho \ln \zeta   \bar u_\rho +\dashint_{\pa B_\rho}\left( c(n)R_{\tilde g} u - u^{\frac{n+2}{n-2}}\right)\\&
\le -\pa_\rho \ln \zeta   \bar u _\rho  - (n(n-2)+O(\rho^2)) \bar u  - c_2 \bar u ^{\frac{n+2}{n-2}}, \nonumber
\end{align}
where  $\bar u$ is the average of $u$ with the standard metric and $c_2>0$, and we have used that $\det \tilde h$ depends only on $\rho$ and
\[
\dashint_{\pa B_\rho} \Delta_{\tilde h} u \rho^{n-1}\,\ud vol_{g_{\mathbb{S}^{n-1}}}=  \frac{\rho^{n-1}}{\sqrt {\det \tilde h}} \dashint_{\pa B_\rho} \Delta_{\tilde h} u\,\ud vol_{\tilde h} =0.
\]
 Let $t=-\ln \rho$ and $ \bar u(\rho)=e^{\frac{n-2}{2} t} w(t) $. By a direct computation,
\[
\bar u_\rho= -e^{\frac{n}{2} t}\left(\frac{n-2}{2} w+ w_t\right),
\]
\[
\bar u_{\rho\rho}=e^{\frac{n+2}{2} t}\left(\frac{n(n-2)}{4} w+(n-1) w_t +w_{tt}\right).
\]
Therefore, we have
\[
\bar u_{\rho\rho }+\frac{n-1}{\rho} \bar u_\rho = e^{\frac{n+2}{2} t} \left(w_{tt}-(\frac{n-2}{2})^2 w\right),
\]
and
\begin{align*}
w_{tt}-(\frac{n-2}{2})^2 w \le e^{-t}[\pa_\rho \ln \zeta  (\frac{n-2}{2} w+ w_t )]-(n(n-2)+O(e^{-2t}))  e^{-2t} w-c_2 w^{\frac{n+2}{n-2}}.
\end{align*}
Note that
\[
r=\frac{e^{2\rho}-1}{e^{2\rho}+1}, \quad \frac{\ud r}{\ud \rho}= \frac{4 e^{2\rho}}{(e^{2\rho}+1)^2},  \quad  \frac{\ud^2 r}{\ud \rho^2} \Big|_{\rho =0}=0,
\]
and thus
\[
r= \rho +O(\rho^3).
\]
It follows that
\begin{align*}
\pa_\rho \ln \zeta&= \frac{2n r}{1-r^2} \frac{4 e^{2\rho}}{(e^{2\rho}+1)^2} =2n( \rho +O(\rho^3) ) (1+O(\rho^2)) (1+O(\rho ^2))\\&
=2n \rho +O(\rho^3)=2n e^{-t} +O(e^{-3t}) .
\end{align*}

Hence,
\begin{align}\label{help-1}
e^{-t}[\pa_\rho \ln \zeta  (\frac{n-2}{2} w+ w_t )]-[n(n-2)+O(e^{-2t})]  e^{-2t} w \nonumber\\
=e^{-2t} (2n +O(e^{-2t})) w_t +O(e^{-4t}) w.
\end{align}
 Thus the upper bound of $w_{tt}-(\frac{n-2}2)^2w$ can be determined as
\be \label{eq:main-obs}
w_{tt}-(\frac{n-2}{2})^2 w   \le e^{-2t} (2n +O(e^{-2t})) w_t +O(e^{-4t}) w -c_2 w^{\frac{n+2}{n-2}}.
\ee

By Corolllary \ref{cor:sphere-harnack}, we have  $|w_t(t)|\le C w(t)$.
Using (\ref{help-1})  and first two lines of (\ref{main-eq-low}), we obtain a lower bound of $w_{tt}-(\frac{n-2}2)^2w$:
\begin{equation}\label{eq:lower-inequality}
w_{tt}-(\frac{n-2}{2})^2 w \ge -c_1 w^{\frac{n+2}{n-2}}- c_3 e^{-2t} w.
\end{equation}

If Claim 2 were not true, by Claim  1 and Corollary \ref{cor:sphere-harnack}, we can choose $\va_0>0$  sufficiently small so that there exist sequences $\bar t_i \le t_i \le  t_i^*$ with $\lim_{i\to \infty} \bar t_i = +\infty$, such that
$w(\bar t_i) = w(t_i^*) = \va_0$, $w_t(t_i) = 0$, and $\lim_{i\to \infty} w(t_i) = 0$. Also the smallness of $w(t)$ implies 
\be \label{eq:convex}
\frac{1}{C} w\le w_{tt}\le C w  \quad \mbox{for }\bar t_i \le t\le t_i^*.
\ee
Hence, for $\bar t_i\le t\le t_i$, we have  $w_t\le 0$
\[
w_t(t) \le  -\frac{1}{C}\int_{t}^{t_i} w\,\ud s.
\]
It follows that for $\bar t_i\le t\le t_i-1$
\[
w_t(t) \le  -\frac{1}{C}\int_{t}^{t+1} w\,\ud s\le -\frac{1}{C} w(t+1) \le - \frac{1}{C} w(t),
\]
where we used Harnack inequality in Corolllary \ref{cor:sphere-harnack}. By \eqref{eq:main-obs}, we obtain, for large $i$
\be \label{eq:crucial}
w_{tt}-(\frac{n-2}{2})^2 w   \le -c_2 w^{\frac{n+2}{n-2}} \quad \mbox{for }\bar t_i\le t\le  t_i-1.
\ee
In conclusion,
\be \label{eq:24-0}
-c_1 w^{\frac{n+2}{n-2}}- c_3 e^{-2t} w \le w_{tt}-(\frac{n-2}{2})^2 w   \le c_3 e^{-4 t_i} w -c_2 w^{\frac{n+2}{n-2}} \quad \mbox{for }\bar t_i\le t\le  t_i.
\ee
and
\begin{equation} \label{eq:24}
-c_1 w^{\frac{n+2}{n-2}} -c_3 e^{-2t} w \le w_{tt}-\frac{n-2}{2} w \le -c_2 w^{\frac{n+2}{n-2}} +c_3 e^{-2t} w \quad \mbox{for }t_i\le t\le  t_i^* .
\end{equation}

Now we use (\ref{eq:24-0}) and (\ref{eq:24}) to derive pointwise estimates of $w(t)$. This part is similar to the proof of (27) and (28) in \cite{marques-1}, the main improvement is the first inequality of \eqref{eq:28}, where $e^{-4t_i}$ replaces $e^{-2\bar t_i}$ of (28) in \cite{marques-1}.

\begin{lem} \label{point-w} The following two estimates hold:
\begin{equation} \label{eq:27}
(\frac{2}{n-2} -ce^{-2t_i}) \ln \frac{w(t)}{w(t_i)} \le t-t_i \le (\frac{2}{n-2} +ce^{-2t_i}) \ln \frac{w(t)}{w(t_i)} +c
\end{equation}
for $t_i\le t\le t_i^*$, and
\begin{equation} \label{eq:28}
(\frac{2}{n-2} -ce^{-4 t_i}) \ln \frac{w(t)}{w(t_i)} \le t_i-t \le (\frac{2}{n-2} +ce^{-2\bar t_i}) \ln \frac{w(t)}{w(t_i)} +c
\end{equation}
for $\bar t_i \le t\le t_i$.
\end{lem}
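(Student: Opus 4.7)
The strategy is to convert the second-order inequalities \eqref{eq:24-0} and \eqref{eq:24} into first-order energy estimates by multiplying by $w_t$ and integrating, then to separate variables and extract the logarithmic scaling. The reference dynamics is the linear ODE $w_{tt}=\bigl(\tfrac{n-2}{2}\bigr)^{2}w$ with initial data $w_t(t_i)=0$, whose solution $w(t)=w(t_i)\cosh\bigl(\tfrac{n-2}{2}(t-t_i)\bigr)$ satisfies $\tfrac{n-2}{2}\lvert t-t_i\rvert=\operatorname{arccosh}(w(t)/w(t_i))=\ln(w(t)/w(t_i))+O(1)$ once $w(t)/w(t_i)\geq 2$. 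The estimates \eqref{eq:27}--\eqref{eq:28} are perturbations of this identity, with the exponential corrections tracking the error terms in the respective differential inequalities.

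For \eqref{eq:27}, I multiply both sides of \eqref{eq:24} by $w_t\geq 0$ and integrate from $t_i$ to $t\in[t_i,t_i^*]$. The products $w_{tt}w_t$, $w w_t$, and $w^{(n+2)/(n-2)}w_t$ are exact derivatives that integrate to $\tfrac12 w_t^{\,2}$, $\tfrac12(w^2-w(t_i)^2)$, and $\tfrac{n-2}{2n}(w^{2n/(n-2)}-w(t_i)^{2n/(n-2)})$ respectively. The exponential error is controlled by $\bigl|\int_{t_i}^{t}c_3 e^{-2s}w w_s\,ds\bigr|\leq \tfrac{c_3}{2}e^{-2t_i}(w^2-w(t_i)^2)$, using monotonicity of $e^{-2s}$. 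The supercritical contribution is a lower-order multiplicative perturbation: by the mean value theorem for $x\mapsto x^{n/(n-2)}$, $w^{2n/(n-2)}-w(t_i)^{2n/(n-2)}\leq C\varepsilon_0^{4/(n-2)}(w^2-w(t_i)^2)$, which, for $\varepsilon_0$ small, is absorbed into the main term. This yields
\[
w_t^{\,2}=\Bigl[\Bigl(\tfrac{n-2}{2}\Bigr)^{2}\pm C e^{-2t_i}\Bigr]\bigl(w^2-w(t_i)^2\bigr).
\]
Taking square roots, separating variables, and using $\int_{w(t_i)}^{s}(\sigma^2-w(t_i)^2)^{-1/2}\,d\sigma=\ln(s/w(t_i))+O(1)$ produces \eqref{eq:27}.

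The proof of \eqref{eq:28} on $[\bar t_i,t_i]$ has the same structure but integrates backwards in time with $w_t\leq 0$. The asymmetric corrections mirror the asymmetric sides of \eqref{eq:24-0}. The coarse left side $-c_1 w^{(n+2)/(n-2)}-c_3 e^{-2t}w$, combined with $e^{-2t}\leq e^{-2\bar t_i}$ on the interval, gives the $ce^{-2\bar t_i}$ correction in the upper bound for $t_i-t$. The refined right side $c_3 e^{-4t_i}w-c_2 w^{(n+2)/(n-2)}$ has the $t$-independent constant $e^{-4t_i}$ as coefficient of $w$, so integration immediately produces the $ce^{-4t_i}$ correction in the lower bound for $t_i-t$. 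This sharpening from $e^{-2t}$ to $e^{-4t_i}$ is the essential improvement over \cite{marques-1}: it is inherited from \eqref{eq:crucial}, where the $e^{-2t}w_t$ error of \eqref{eq:main-obs} is discarded on $[\bar t_i,t_i-1]$ using $w_t<0$, leaving only the decaying $O(e^{-4t})w$ contribution on the short remaining interval $[t_i-1,t_i]$, which is $O(e^{-4t_i})$.

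The main obstacle is the careful bookkeeping required to justify the two distinct exponential correction orders on the two sides of \eqref{eq:28}, in particular exploiting the sign of $w_t$ to eliminate the $e^{-2t}w_t$ term on the long subinterval $[\bar t_i,t_i-1]$. The secondary difficulty is ensuring that the supercritical contribution $C\varepsilon_0^{4/(n-2)}$ is absorbed into a choice of $\varepsilon_0$ small, rather than contaminating the leading coefficient $\tfrac{2}{n-2}$; once the energy identities are established, the separation of variables and the elementary arccosh computation produce \eqref{eq:27}--\eqref{eq:28} routinely.
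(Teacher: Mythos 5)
Your overall strategy --- multiply the differential inequalities \eqref{eq:24-0}, \eqref{eq:24} by $w_t$, integrate to an energy estimate of the form $w_t^2\approx\bigl((\tfrac{n-2}{2})^2+\text{error}\bigr)(w^2-w(t_i)^2)$, then separate variables and use $\int_1^a(s^2-1)^{-1/2}\,ds=\ln a+O(1)$ --- is exactly the paper's (which writes out only the first inequality of \eqref{eq:28} and defers the other three to Marques). Your derivation of that first inequality, and of the first inequality of \eqref{eq:27}, is correct and matches the paper: there the supercritical term $\mp c\,w^{\frac{n+2}{n-2}}w_t$ has a favorable sign and is simply discarded, and the $t$-independent coefficient $e^{-4t_i}$ passes straight through the integration.

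The gap is in the two upper bounds (the inequalities carrying the additive $+c$), where the supercritical term must be kept. Your mean-value-theorem bound $w^{\frac{2n}{n-2}}-w(t_i)^{\frac{2n}{n-2}}\le C\varepsilon_0^{\frac{4}{n-2}}(w^2-w(t_i)^2)$ produces a multiplicative perturbation of the coefficient of size $C\varepsilon_0^{4/(n-2)}$, which is small but \emph{fixed}, independent of $i$; it cannot be ``absorbed'' into $\pm Ce^{-2t_i}$, which tends to zero, so the displayed identity $w_t^{\,2}=[(\tfrac{n-2}{2})^2\pm Ce^{-2t_i}](w^2-w(t_i)^2)$ does not follow. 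Carried through, your argument yields $t-t_i\le(\tfrac{2}{n-2}+ce^{-2t_i}+C\varepsilon_0^{4/(n-2)})\ln\tfrac{w(t)}{w(t_i)}+c$, which is strictly weaker than \eqref{eq:27} and not sufficient downstream: the extra term turns \eqref{eq:t*} into $t_i^*\le 2t_i-\bar t_i+C\varepsilon_0^{4/(n-2)}t_i+C$, which ruins the estimate of $I_1$ and the final contradiction. The correct treatment converts the supercritical contribution into an \emph{additive} constant rather than a multiplicative one: after the change of variables its contribution to $\int\frac{dt}{dw}\,dw$ is controlled by $C\int_{2w(t_i)}^{\varepsilon_0}w^{\frac{4}{n-2}}(w^2-w(t_i)^2)^{-1/2}\,dw\le C\int_0^{\varepsilon_0}w^{\frac{4}{n-2}-1}\,dw=C\varepsilon_0^{\frac{4}{n-2}}$, a bounded quantity (this is precisely where $n\le 6$ enters; the initial region $w\le 2w(t_i)$ contributes another bounded amount via \eqref{eq:convex}). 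That is the actual source of the $+c$ in \eqref{eq:27} and \eqref{eq:28}.
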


\begin{proof}[Proof of Lemma \ref{point-w}] We only prove the first inequality in \eqref{eq:28}, since the other three were proved in \cite{marques-1}.  By the second inequality of (\ref{eq:24-0}) we have
$$w_{tt}-\Big((\frac{n-2}2)^2+c_3e^{-4t_i}\Big)w\le 0, \quad \bar t_i<t<t_i. $$
Multiplying $w'(t)$ (which is non-positive) on both sides we have
$$\frac{\ud}{\ud t}\left (w_t^2-((\frac{n-2}2)^2+c_3e^{-4t_i})w^2\right )\ge 0. $$
It follows that
\[
w_t(t)^2-\Big((\frac{n-2}2)^2+c_3e^{-4t_i}\Big)w(t)^2 \le -\Big((\frac{n-2}2)^2+c_3e^{-4t_i}\Big)w(t_i)^2 \quad \mbox{for }\bar t_i<t<t_i.
\]
Hence,
\[
\frac{\ud t}{\ud w}= \frac{1}{w_t}  \le -\Big((\frac{n-2}2)^2+c_3e^{-4t_i}\Big)^{-\frac{1}{2}} \frac{1}{\sqrt{w(t)^2- w(t_i)^2 }}.
\]
Integrating the above inequality, we have
\begin{align*}
t_i -t =- \int_{w(t_i)}^{w(t)} \frac{\ud t}{\ud w} \,\ud w &\ge (\frac{2}{n-2} -c e^{-4t_i}) \int_{w(t_i)}^{w(t)} \frac{1}{\sqrt{w^2- w(t_i)^2 }} \,\ud w\\&
\ge   (\frac{2}{n-2} -c e^{-4t_i}) \ln \frac{w(t)}{w(t_i)},
\end{align*}
where we have used the estimate
\begin{align*}
\int_{1}^{a} \frac{1}{\sqrt{s^2-1}}\,\ud s=  \int_0^{\ln a} \frac{e^{\xi}}{ \sqrt{e^{2\xi}-1}}\,\ud \xi \ge  \int_0^{\ln a} 1\,\ud \xi =\ln a \quad \mbox{for }a>1.
\end{align*}

Therefore, Lemma \ref{point-w} is proved.

\end{proof}

\medskip

At $|x|=\rho_i= e^{-t_i}$,  we have
\begin{equation} \label{eq:asy-sym}
u(x)=\bar u(r_i)(1+o(1)),\quad |\nabla u(x)|=-\bar u'(r_i)(1+o(1)).
\end{equation}
Indeed, let $h_i(y)=\frac{u(\rho_iy)}{u(\rho_i e_1)}$, where $e_1=(1,0,\dots, 0)$.  We have
$$L_{g_i}h_i+n(n-2) (\rho_i^{\frac{n-2}2}u_i(\rho_i e_1))^{\frac{4}{n-2}} h_i^{\frac{n+2}{n-2}}=0 \quad \mbox{in }B_{1/\rho_i}\setminus \{0\},$$
where $ (g_i)_{kl}=g_{kl}(\rho_iy)$.  By Corollary \ref{cor:sphere-harnack},  $h_i$ is locally uniformly bounded in $\R^n \setminus \{0\}$. By the choice of $\rho_i$, $\rho_i^{\frac{n-2}2}u_i(\rho_i e_1)\to 0$ as $i\to \infty$. Hence, $h_i \to h$ in $C^2_{loc}(\R^n \setminus \{0\}) $ for some $h$ satisfying
\[
-\Delta h=0 \quad \mbox{in }\R^n \setminus \{0\}, \quad h\ge 0
\]
and $h(e_1)=1$ and $\partial_\rho(h(y)\rho^{\frac{n-2}2})=0$. By the B\^ocher theorem,
$h(y)=a|y|^{2-n}+b$ with $a=b=\frac 12$.  Hence, \eqref{eq:asy-sym} follows.

By \eqref{eq:asy-sym}, we have
\begin{equation*}
P(\rho_{i}, u)=|\mathbb{S}^{n-1}|
\left(\frac{1}{2} w'\left(t_{i}\right)^{2}-\frac{1}{2}\left(\frac{n-2}{2}\right)^{2} w^{2}\left(t_{i}\right)+\frac{(n-2)^{2}}{2} w^{\frac{2 n}{n-2}}\left(t_{i}\right)\right)(1+o(1)).
\end{equation*}
Hence for sufficiently large $i$
\begin{equation}\label{temp-new-1}
w^{2}(t_{i}) \leq c_{n}|P(\rho_{i},u )|.
\end{equation}
By the choice of $t_i$, we have
\begin{equation}
\label{eq:poho-0}
P(u)=\lim _{i \to \infty} P(\rho_{i}, u)=0.
\end{equation}

It follows the Pohozaev identity \eqref{eq:phozaev} and \eqref{eq:poho-0} that
\begin{equation*}
\begin{split}
|P(\rho_i,u)|&\leq \int_{B_{\rho_{i}} \setminus  B_{\rho_{i}^{*}}}|\mathcal{A}(u)| \ud x +\int_{B_{\rho_{i}^{*}}}|\mathcal{A}(u)| \ud x \\&
=: I_1+I_2,
\end{split}
\end{equation*}
where $\rho_i^*=e^{-t_i^*}$,  $$
\mathcal{A}(u)=\left(x^{k} \partial_{k} u+\frac{n-2}{2} u\right) (L_{\tilde g}u-\Delta u).
$$
By Corollary \ref{cor:sphere-harnack}, we have
\[
|\mathcal{A}(u)| \le C |x|^{2-n}.
\]
Hence,
\[
I_2 \le C(\rho^*)^2 = C e^{-2t_i^*}.
\]
By the first inequality in \eqref{eq:27}, we have
\[
w(t)\le w(t_i) \exp \left( \Big(\frac{n-2}{2} +ce^{-2t_i }\Big)(t-t_i)\right),
\]
which implies
\[
u(x) \le c w(t_i) \exp \left(- \Big(\frac{n-2}{2} +ce^{-2t_i }\Big)t_i\right) |x|^{2-n-ce^{-2t_i}}\quad \mbox{for } \rho_i^*\le |x|\le \rho_i.
\]
By Corollary \ref{cor:sphere-harnack}, we have
\begin{align*}
|\mathcal{A}(u)|& \le C  u ^2.
\end{align*}
Hence,
\begin{align*}
I_1 \le C w(t_i)^2 e^{-(n-2) t_i}  \int_{\rho_i^*\le |x|\le \rho_i }  |x|^{4-2n-2ce^{-2t_i} }  \,\ud x  .
\end{align*}
By \eqref{eq:27} and \eqref{eq:28}, we see that
\[
t_i^*-t_i \le (\frac{2}{n-2}+ce^{-2t_i}) \ln \frac{\va_0}{ w(t_i)} +c, \quad t_i-\bar t_i \ge  (\frac{2}{n-2}-ce^{-4t_i}) \ln \frac{\va_0}{ w(t_i)}.
\]
Hence,
\be \label{eq:quotient}
\frac{t_i^*-t_i }{t_i-\bar t_i}\le 1+c e^{-2t_i}+C(\ln \frac{\va_0}{ w(t_i)})^{-1}.
\ee
Using the second inequality of \eqref{eq:28}, we have $(t_i-\bar t_i) (\ln \frac{\va_0}{ w(t_i)})^{-1} \le C$. Thus \eqref{eq:quotient} implies
\be \label{eq:t*}
t_i^*\le 2t_i -\bar t_i +C.
\ee
Using (\ref{eq:t*}) we can estimate $I_1$ more precisely:
\begin{align*}
I_1 &\le C w(t_i)^2 e^{-(n-2) t_i}  ( (\rho_i^*)^{4-n}- \rho_i ^{4-n})\\&
= C w(t_i)^2 e^{-(n-2) t_i} (e^{(n-4) t_i^*}-e^{(n-4) t_i} )\\&
\le C w(t_i)^2 (C e^{(n-6) t_i - (n-4) \bar t_i }-e^{-2 t_i} )\le C  w(t_i)^2 e^{-2\bar t_i},
\end{align*}
where in the final step we used $n=6$. Combing the estimates of $I_1$ and $I_2$, we have, for $n=6$,
\be \label{temp-new-2}
|P(\rho_i,u)|\leq C w(t_i)^2  e^{-2 \bar t_i}   +C  e^{-2t_i^*}.
\ee

Using \eqref{temp-new-1} and \eqref{temp-new-2}, we can combine terms to obtain
\be \label{eq:vanishing}
w(t_i)^2 \le  C e^{-2t_i^*}
\ee
for $i$ large.
From the first inequality of \eqref{eq:28} and the first inequality of \eqref{eq:27},  we have, for $n=6$,
\[
t_i-\bar t_i \ge (\frac12 -ce^{-4t_i}) \ln \frac{\va_0}{ w(t_i)}
\]
and
\[
t_i^*- t_i \ge (\frac12 -ce^{-2t_i}) \ln \frac{\va_0}{ w(t_i)}.
\]
Adding them up and using \eqref{eq:vanishing} and \eqref{eq:t*},  we have
\[
t_i^*- \bar t_i  \ge -(1 -ce^{-2t_i}) \ln  w(t_i) -C\ge (1 -ce^{-2t_i})   t_i^* -C \ge t_i^*-C,
\]
which implies
\[
\bar t_i \le C.
\]
This contradicts to $\bar t_i \to \infty$. Therefore, Claim 2 is proved.

Based on Claim 2 we clearly have $w'(t)<0$ for $t>T_1$.
Equation (\ref{eq:lower-inequality}) now implies
$$w_{tt}-(4-\delta)w\ge 0\quad \mbox{for } t\ge T_1, $$
where $\delta>0$ is some small constant. Thus for $t\ge T_1$,
$w_t^2-(4-\delta)w^2$ is non-increasing, and the integration of this quantity leads to
$$w(t)\le w(T_1)\exp(-(4-\delta)(t-T_1)),\quad t>T_1, $$
whose equivalent form is
$$u(x)\le C(\delta)|x|^{-\delta}. $$
Then standard elliptic estimate immediately implies that $u$ has a removable singularity at the origin.

Therefore, we complete the proof of Theorem \ref{thm:remo}.
\end{proof}

\begin{cor} \label{cor:lower-bound} Assume $n=6$ and $u>0$ is a solution of \eqref{Y-10}. If $0$ is not removable, then
\[
u(x)\ge \frac{1}{C}|x|^{-2},
\]
where $C>1$ is independent of $x$.
\end{cor}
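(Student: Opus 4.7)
The plan is a contradiction argument reducing the corollary to Theorem \ref{thm:remo}. Assume that the lower bound fails, so there exist $x_i\to 0$ with $u(x_i)|x_i|^2\to 0$. Setting $r_i=|x_i|$, I would introduce the Yamabe rescaling
\[
v_i(y):=r_i^{2}\,u(r_iy),\qquad y\in B_{1/r_i}\setminus\{0\},
\]
so that $v_i(x_i/r_i)=u(x_i)\,r_i^{2}\to 0$ at a point on the unit sphere $|y|=1$.

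The first task is to propagate this pointwise smallness to $C^2$ smallness on $|y|=1$. Applying Corollary \ref{cor:sphere-harnack} to $u$ on the annulus $r_i/2\le|x|\le 2r_i$, equivalently to $v_i$ on $1/2\le|y|\le 2$, one obtains $\max_{|y|=1}v_i\le C\min_{|y|=1}v_i$ together with $|\nabla v_i|+|\nabla^2 v_i|\le C v_i$ on that annulus. Combined with the vanishing of $v_i(x_i/r_i)$, this yields $\|v_i\|_{C^2(\{|y|=1\})}\to 0$.

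Next I would exploit the scale invariance of the Pohozaev integral. With $n=6$, so $(n-2)/2=2$ and $2n/(n-2)=3$, substituting $u(x)=r_i^{-2}v_i(x/r_i)$ into the definition of $P(r_i,u)$ and using $\ud S_{|x|=r_i}=r_i^{5}\,\ud\omega$ gives the identity
\[
P(r_i,u)=P(1,v_i)=\int_{|y|=1}\Big(2v_i\,\partial_\rho v_i-\tfrac12|\nabla v_i|^2+(\partial_\rho v_i)^2+8v_i^{3}\Big)\,\ud S.
\]
The $C^1$ convergence established in the previous step forces $P(1,v_i)\to 0$, hence $P(r_i,u)\to 0$. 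Since the limit $P(u)=\lim_{r\to 0}P(r,u)$ exists by the Pohozaev identity \eqref{eq:phozaev} together with the bounds in Corollary \ref{cor:sphere-harnack}, I conclude $P(u)=0$.

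Finally, Theorem \ref{thm:remo} then forces $0$ to be a removable singularity of $u$, contradicting the hypothesis of the corollary, so the lower bound must hold. The only delicate point is the verification of the scaling identity $P(r_i,u)=P(1,v_i)$, which is a routine though slightly tedious computation; no analytic ingredients beyond Theorem \ref{thm:remo} and Corollary \ref{cor:sphere-harnack} are required.
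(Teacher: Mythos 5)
Your proposal is correct and follows the same overall reduction as the paper: if the lower bound fails, one shows $P(u)=0$, and Theorem \ref{thm:remo} then makes $0$ a removable singularity, contradicting the hypothesis. The paper's own proof of the corollary is a one-line reference back to the argument for \eqref{eq:poho-0}, which passes through the refined spherical asymptotics \eqref{eq:asy-sym} (obtained from a blow-down and the B\^ocher theorem) and, as written there, uses that the chosen radii are critical points of $w$. Your justification of the key step $P(r_i,u)\to 0$ is more self-contained: you use only the exact scale invariance $P(r_i,u)=P(1,v_i)$ together with the spherical Harnack and derivative bounds of Corollary \ref{cor:sphere-harnack} to make every term of $P(1,v_i)$ of size $O\bigl((\min_{|y|=1}v_i)^2\bigr)\to 0$. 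This avoids having to select critical radii of $\bar u(\rho)\rho^{2}$ (which the hypothesis $\liminf_{x\to 0}|x|^{2}u(x)=0$ does not directly supply) and bypasses \eqref{eq:asy-sym} altogether. The only points to keep in view are that Corollary \ref{cor:sphere-harnack} is stated for the metric $g$ but transfers to $\tilde g$ because the conformal factor $f$ is smooth and positive near $0$, and that the existence of the full limit $P(u)=\lim_{r\to 0}P(r,u)$, already established from \eqref{eq:phozaev}, is what upgrades convergence along your subsequence $r_i$ to $P(u)=0$; both of these you have in place.
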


\begin{proof} If it were false, then $\liminf_{x\to 0}|x| ^2 u(x)=0$.  As the proof of \eqref{eq:poho-0}, we have $P(u)=0$ and thus $0$ is removable. We obtain a contradiction. The corollary is proved.

\end{proof}

\begin{proof}[Proof of Theorem \ref{asy-s-1}]  Suppose that $0$ is not a removable singularity of $u$.
After conformal changes,  using Theorem \ref{sphe-har} and Corollary \ref{cor:lower-bound} we have
\[
\frac{1}{C}|x|^{-2} \le u(x)\le C|x|^{-2}.
\]
Theorem \ref{asy-s-1} follows immediately from Theorem 8 of \cite{marques-1}.

\end{proof}

\small

\bigskip

\noindent J. Xiong

\noindent School of Mathematical Sciences, Beijing Normal University\\
Beijing 100875, China\\[1mm]
Email: \textsf{jx@bnu.edu.cn}

\medskip

\noindent L. Zhang

\noindent Department of Mathematics, University of Florida\\
        358 Little Hall P.O.Box 118105\\
        Gainesville FL 32611-8105, USA\\[1mm]
Email: \textsf{leizhang@ufl.edu}

\end{document}